\newtheorem{theorem}{Theorem}[section]
\newtheorem{corollary}[theorem]{Corollary}
\newtheorem{proposition}[theorem]{Proposition}
\newtheorem{hypothesis}[theorem]{Hypothesis}
\theoremstyle{definition}
\newtheorem{definition}[theorem]{Definition}
\newtheorem{remark}[theorem]{Remark}
\numberwithin{equation}{section}
\newcommand{\R}{\mathds{R}}
\newcommand{\C}{\mathds{C}}
\newcommand{\N}{\mathds{N}}
\newcommand{\al}{\alpha}
\newcommand{\be}{\beta}
\newcommand{\gm}{\gamma}
\newcommand{\lm}{\lambda}
\newcommand{\beq}{\begin{equation}}
\newcommand{\eeq}{\end{equation}}
\newcommand{\beqnt}{\begin{equation*}}
\newcommand{\eeqnt}{\end{equation*}}
\DeclareMathOperator{\rd}{d}
\newcommand{\set}[2]{\left\{  #1 : #2 \right\}  }
\newcommand{\SSet}[2]{\left\{  #1 : #2 \right\}  }
\newcommand\re{\mathrm{Re }}
\newcommand\I{\mathrm{i}}
\newcommand{\dom}{\mathcal{D}}
\newcommand\Ran{\mathrm{ran }}
\renewcommand\H{\mathcal{H}}
\newcommand\A{\mathcal{A}}
\newcommand{\LL}{\operatorname{L}}
\DeclareMathOperator*{\nlim}{n\,-lim}
\DeclareMathOperator{\diag}{diag}
\newcommand\rref[1]{{\rm \ref{#1}}}
\begin{document}

\title[Block-Diagonalization of Operators with Gaps]
{Block-Diagonalization of Operators with Gaps, with Applications to Dirac operators}

\author{Jean-Claude Cuenin}
\address{Departement of Mathematics, Imperial College London, London SW7 2AZ, UK}
\email{j.cuenin@imperial.ac.uk}

\thanks{The author gratefully acknowledges the support of Schweizer Nationalfonds, SNF, Grant No.\ $200020\_130184$.}

\begin{abstract}
We present new results on the block-diagonalization of Dirac operators on three-dimensional Euclidean space with unbounded potentials. Classes of admissible potentials include electromagnetic potentials with strong Coulomb singularities and more general matrix-valued potentials, even non-self-adjoint ones. For the Coulomb potential, we achieve an exact diagonalization up to nuclear charge $Z=124$ and prove the convergence of the Douglas-Kroll-He\ss\ approximation up to $Z=62$, thus improving the upper bounds $Z=93$ and $Z=51$, respectively, by H.\ Siedentop and E.\ Stockmeyer considerably.
These results follow from abstract theorems on perturbations of spectral subspaces of operators with gaps, which are based on a method of H.\ Langer and C.\ Tretter and are also of independent interest.
\end{abstract}

\maketitle

\setcounter{tocdepth}{1}
\tableofcontents

\section{Introduction}

The Dirac operator on $\R^3$ governing the motion of a relativistic particle of half-integer spin in the presence of an external electromagnetic field is given by (in units where the reduced Planck constant, the velocity of light and the particle mass are equal to one)
\beq\label{electromagnetic Dirac}
H=\begin{pmatrix}1+\Phi &-\I\,\boldsymbol{\sigma}\cdot(\nabla-\I\mathbf{A})\\-\I\,\boldsymbol{\sigma}\cdot(\nabla-\I\mathbf{A})&-1+\Phi \end{pmatrix}.
\eeq
Here, $\boldsymbol{\sigma}:=(\sigma_1,\sigma_2,\sigma_3)$ is a formal vector whose components are the Pauli-matrices 
\beqnt
 \sigma_1=\begin{pmatrix}
           0&1\\1&0
          \end{pmatrix},\quad
 \sigma_2=\begin{pmatrix}
           0&-\I\\ \I&0
          \end{pmatrix},\quad
 \sigma_3=\begin{pmatrix}
           1&0\\0&-1
          \end{pmatrix},
\eeqnt
$\Phi:\R^3\to\R$ is the electric potential and $\mathbf{A}:\R^3\to\R^3$ is the magnetic vector potential, which determine the electric and magnetic field (uniquely up to a choice of gauge) by virtue of
\[
 \mathbf{E}=\nabla\Phi,\quad \mathbf{B}=\operatorname{curl}\mathbf{A}.
\]
We regard $H$ as an unbounded operator in the Hilbert space of square-integrable functions

\beq\label{natural decomposition}
\H=L^2\left(\R^3,\C^2\right)\oplus L^2\left(\R^3,\C^2\right).
\eeq

It is well known that the Dirac operator is not bounded from below, giving rise to an infinite ``sea" of unphysical negative energy states. In the context of Dirac's hole theory, the Pauli exclusion principle is invoked in order to ``fill the sea", i.e.\ declare the states of negative energy as already occupied and restrict $H$ to its positive spectral subspace. While hole theory turned out to be unsatisfactory from a theoretical point of view and was ultimately made obsolete by quantum electrodynamics, it still has its merits in atomic physics and quantum chemistry, where the energy scale is well below the threshold for particle creation and annihilation.

In the field-free case ($\Phi=0$, $\mathbf{A}=0$) the now famous Foldy-Wouthuysen transformation \cite{FW50} may be invoked to decouple the positive and negative spectral subspaces. However, the original method proposed by Foldy and Wouthuysen in the presence of external fields makes use of an ill-defined expansion in the inverse speed of light and has to be discarded, see \cite{Th}. 
For Dirac operators in a purely magnetic field ($\Phi=0$), supersymmetric methods have been employed by Thaller \cite{Tha87, Th} to construct an exact transformation. However, these methods do not apply to electric potentials, which are of paramount importance from the physical point of view. 
In \cite{LT}, Langer and Tretter developed an abstract method for diagonalization of block operator matrices based on indefinite inner product spaces, which yields an exact transformation of the Dirac operator for bounded electric potentials of norm less than one. 
Siedentop and Stockmeyer \cite{SS06} proved the existence of an exact transformation for the Dirac operator with Coulomb potential, as well as the convergence of an approximate block-diagonalization, known as the Douglas-Kroll-He\ss\ (DKH) method. The latter, proposed by Douglas and Kroll in their seminal paper \cite{DK74}, consists of a an iterative scheme which decouples the positive and negative spectral subspaces up to any given order in the coupling constant of the potential. Its usefulness for quantum chemical implementations was first realized by B.A. He\ss\ in \cite{Hess86} and has since turned into one of the most successful computational tools in relativistic quantum chemistry \cite{HRW02,RW04i,RW04ii,RW06i,RW06ii}.
From a mathematical point of view, the method of \cite{SS06} was extended to the multi-particle case in \cite{HubSto07} and to complex-dilated Dirac operators in \cite{Hub09}. Moreover, it was mentioned in \cite{Jak08point} that the same method works for non-vanishing magnetic fields ($\mathbf{A}\neq 0$).
Different techniques were employed in \cite{GNP89} and in \cite{Cor01, Cor04, Cor04ii, Cor07} to handle weaker electric potentials than the Coulomb potential.

By using a combination of the methods of \cite{LT} and \cite{SS06} we achieve generalizations of the aforementioned results; the main novelties are the following:

\begin{itemize}
\item When $\Phi=-\gm/|\cdot|$ is the Coulomb potential and $\mathbf{A}=0$, we obtain an exact block-diagonalization of $H$ up to nuclear charge $Z=124$, extending \cite[Theorem 1]{SS06}, where a transformation was shown to exist up to $Z=93$;
 
\item We show the convergence of the DKH approximation up to $Z=62$, extending \cite[Theorem~2]{SS06}, where the convergence was proved up to $Z=51$; 
 
\item Potentials are allowed to be (not necessarily symmetric) sesquilinear forms;

\item The method can be adapted to handle strong (e.g.\ constant, but also unbounded) magnetic fields.
\end{itemize}

Moreover, our transformation can be chosen as the direct rotation (see \cite{Da58}) between the subspace of upper (lower) component Dirac spinors and the positive (negative) spectral subspace of $H$, that is, it has minimal deviation from the identity among all such transformations. This is a consequence of the fact that the positive and negative spectral subspaces $Q_{\pm}\H$ of $H$ admit a representation in terms of so-called angular operators. Identifying the direct summands in the decomposition \eqref{natural decomposition} with the subspaces 
\[
P_{u}\H:=\SSet{\begin{pmatrix}u\\0\end{pmatrix}}{u\in L^2\left(\R^3,\C^2\right)},\quad P_{l}\H:=\SSet{\begin{pmatrix}0\\v\end{pmatrix}}{g\in L^2\left(\R^3,\C^2\right)}
\]
of ``upper-'' and ``lower-component" Dirac spinors, respectively, this means that
\begin{align*}
Q_+\H=\SSet{\begin{pmatrix}u\\X_+u\end{pmatrix}}{u\in P_{u}\H},\, Q_-\H=\SSet{\begin{pmatrix}X_-v\\v\end{pmatrix}}{v\in P_{l}\H},
\end{align*}
where $X_{\pm}$ are bounded operators in $L^2\left(\R^3,\C^2\right)$. In particular, we obtain bounds on the norms of $X_{\pm}$, which means that for any eigenfunction $\psi=(\Psi_{u},\Psi_{l})^{\rm t}$ corresponding e.g.\ to a positive eigenvalue of $H$, we must have
\[
\|\Psi_{l}\|\leq \|K_+\| \,\|\Psi_{u}\|.
\]

We emphasize that our technique is purely operator-theoretic in nature and thus not limited to the Dirac operator. Our main results, Theorems \ref{main theorem 1}, \ref{main theorem 1 sym.} and Theorems \ref{main theorem 2}, \ref{main theorem 2 sym.} apply to arbitrary self-adjoint operators with a spectral gap at zero, perturbed in the quadratic form sense.

We briefly sketch the outline of the paper. In Section 2, we set the necessary operator-theoretic background. Among the tools needed are indefinite quadratic form methods, spectral projections for non-selfadjoint unbounded operators and a theorem on accretive operators in spaces with an indefinite inner product. The main abstract results of the paper are stated in Section 3 and are applied to the Dirac operator on $\R^3$ with Coulomb-type potentials (with and without magnetic field) in Section 4. Section 5 contains the proofs of the main theorems.

\section{Preliminaries}

\subsection{Notation}
Let $X$ and $Y$ be Banach spaces. For an operator $S(X\to Y)$, we denote by $\dom(S)\subset X$ its domain and by $\Ran(S)\subset Y$ its range. All Banach spaces are always assumed to complex and operators between them are assumed to be linear. The Banach space of bounded operators from $X$ to $Y$ is denoted by $\LL(X,Y)$; if $X=Y$, we simply write $\LL(X):=\LL(X,X)$. For the identity operator in $X$ we write $I_X$ or $I$ if it is clear from the context which space is meant. If $Y=X$, then $\rho(S)$ and $\sigma(S)$ denote the resolvent set and spectrum of $S$, respectively. If $S$ is closed, then the former coincides with the set of all $z\in\C$ such that $S-z:\dom(S)\to X$ is bijective; here, we used the abbreviation $S-z:=S-z\,I$. If $S$ is closable, we denote its closure by $\overline{S}$. 
By an isomorphism between two Banach spaces we mean a linear homeomorhism. A subspace $\mathcal{L}\subset X$ is always understood to be closed. 
The topological direct sum of $X$ and $Y$ is denoted by $X\dotplus Y$. For two Hilbert spaces $\H$ and $\mathcal{K}$, the orthogonal sum is denoted by $\H\oplus\mathcal{K}$. Moreover, the scalar product $(\cdot,\cdot)$ in a Hilbert space $\H$ is assumed to be linear in the first variable, and for a densely defined operator $H$ in $\H$, its Hilbert space adjoint is denoted by $H^*$.
The Schatten-von Neumann ideals in $\H$ of order~$p$ are denoted by $\mathcal{S}_p(\H)$
For a sesquilinear form $\mathfrak{v}:\dom(\mathfrak{v})\times \dom(\mathfrak{v})\to \C$, the corresponding quadratic form is abbreviated by $\mathfrak{v}[u]:=\mathfrak{v}[u,u]$.
We say that an interval $(\al,\be)$ is a spectral gap for a self-adjoint operator $H$ if $(\al,\be)\subset\rho(H)$.
An integral $\int^{'}$ is always understood in the sense of the Cauchy principal value at zero and infinity.

\subsection{Indefinite quadratic forms}

An operator $H$ in a Hilbert space $\H$ is said to be \emph{associated} with a densely defined sesquilinear form $\mathfrak{h}$ if the following hold:
\begin{itemize}
 \item[\rm i)] $H$ is closed and densely defined;
 \item[\rm ii)] $\dom(H), \dom(H^*)\subset\dom(\mathfrak{h})$;
 \item[\rm iii)] $(Hu,v)=\mathfrak{h}[u,v]$, $u\in\dom(H)$, $v\in\dom(\mathfrak{h})$;
 \item[\rm iv)] $(u,H^*v)=\mathfrak{h}[u,v]$, $u\in\dom(\mathfrak{h})$, $v\in\dom(H^*)$.
\end{itemize}
If such an operator exists, then it is uniquely determined, see \cite[Proposition 2.3]{Veselic}. 
If $\mathfrak{h}$ is symmetric and $H$ is self-adjoint, then  $\mathfrak{h}$ is said to be \emph{represented} by $H$ if $\dom(\mathfrak{h})=\dom(|H|^{1/2})$ and
\[
\mathfrak{h}[u,v]=\left(|H|^{1/2}u,\operatorname{sign}(H)|H|^{1/2}v\right),\quad u,v\in \dom(\mathfrak{h}).
\]
The classical first representation theorem as found e.g.\ in \cite[Theorem VI.2.1.]{Ka} applies only to closed sectorial forms and will thus not be applicable in the present case. Instead, we shall borrow results from \cite{Veselic}, which generalize the well known pseudo-Friedrichs extension \cite[Theorem VI.3.11.]{Ka} to quadratic form perturbations.

\begin{hypothesis}\label{hypothesis form bounded}
Let $H_0$ be a self-adjoint operator in a Hilbert space $\H$, and let $\mathfrak{v}$ be a sesquilinear form on $\H$ such that 
for all $u,v\in \dom(\mathfrak{v})$
\begin{align}\label{rel. form bounded}
 |\mathfrak{v}[u,v]|\leq \|H_{a,b}^{1/2}u\|\|H_{a,b}^{1/2}v\|,\quad u,v\in \dom(\mathfrak{v}),\quad H_{a,b}:=a+b|H_0|,
\end{align}
for some $a,b\geq 0$, where $\dom(\mathfrak{v})$ is a core for $|H_0|^{1/2}$. 
\end{hypothesis}

Clearly, any sesquilinear form $\mathfrak{v}$ satisfying the assumptions of Hypothesis \ref{hypothesis form bounded} may be extended to a sesquilinear form on
$\mathcal{Q}:=\dom(|H_0|^{1/2})$ for which \eqref{rel. form bounded} continues to hold. We will therefore always assume that $\dom(\mathfrak{v})=\mathcal{Q}$.

\begin{remark}\label{remark eps 2 nonsymm.}
Condition \eqref{rel. form bounded} is equivalent to
\beqnt
 2|\mathfrak{v}[u,v]|\leq \left(a\|u\|^2+b\|\,H_0|^{1/2}u\|^2\right)+\left(a\|v\|^2+b\|\,H_0|^{1/2}v\|^2\right)
\eeqnt
If $\mathfrak{v}$ is a symmetric form, then \eqref{rel. form bounded} is equivalent to
\beq\label{v symmetric}
 |\mathfrak{v}[u]|\leq a\|u\|^2+b\|\,H_0|^{1/2}u\|^2, \quad u\in\dom(\mathfrak{v}).
\eeq
In general, \eqref{v symmetric} implies \eqref{rel. form bounded} with $a,b$ replaced by $2a,2b$.
\end{remark}

By the Riesz representation theorem, the formula
\beq\label{C bounded}
(C_{a,b}u,v)=\mathfrak{v}[H_{a,b}^{-1/2}u,H_{a,b}^{-1/2}v],\quad u,v\in\H,
\eeq
defines an operator $C_{a,b}\in\LL(\H)$ with $\|C_{a,b}\|\leq 1$. Assume that the operator-valued function $\widehat{C}_{a,b}:\rho(H_0)\to\C$,
\[
 \widehat{C}_{a,b}(z):=(H_0-z)H_{a,b}^{-1}+C_{a,b},\quad z\in\rho(H_0),
\]
has nonempty resolvent set, i.e.\ there exists $z_0\in\rho(H_0)$ such that $\widehat{C}_{a,b}(z_0)$ has a bounded inverse. By \cite[Theorem 2.4]{Veselic} there exists a unique operator $H$ associated to the quadratic form $\mathfrak{h}=\mathfrak{h}_0+\mathfrak{v}$, where
\begin{align*}
\mathfrak{h}_0[u,v]&:=\left(|H_0|^{1/2}u,\operatorname{sign}(H_0)|H_0|^{1/2}v\right),\quad u,v\in\mathcal{Q}
\end{align*}
is the form represented by $H_0$. More precisely, $H$ is given by the formulas
\begin{align}\label{H form construction}
 H-z&=H_{a,b}^{1/2}\widehat{C}_{a,b}(z)H_{a,b}^{1/2},\\
 H^*-\overline{z}&=H_{a,b}^{1/2}\widehat{C}_{a,b}(z)^*H_{a,b}^{1/2},\quad z\in\C,
\end{align}
and $\dom(H)$ is a core for $|H_0|^{1/2}$.
Whenever $\widehat{C}_{a,b}(z)$ is boundedly invertible, then $z\in\rho(H)$, $\overline{z}\in\rho(H^*)$, and
\begin{align}\label{resolvent H form construction}
 (H-z)^{-1}&=H_{a,b}^{-1/2}\widehat{C}_{a,b}(z)^{-1}H_{a,b}^{-1/2},\\
 (H^*-\overline{z})^{-1}&=H_{a,b}^{-1/2}\widehat{C}_{a,b}(z)^{-*}H_{a,b}^{-1/2}.
\end{align}
The construction does not depend on $a,b$. 

\begin{remark}\label{rem. v form of an operator}
If $\mathfrak{v}$ is the form of an operator $V$, 
\[
\mathfrak{v}[u,v]=(Vu,v),\quad u,v\in\dom(V),
\]
then $H$ is the pseudo-Friedrichs extension of $H_0+V$.
\end{remark}

We note that the construction of $H$ in \cite{Veselic} is accompanied by a spectral inclusion, see \cite[Theorems 2.11., 2.16., 3.1.]{Veselic}; compare also \cite{CueTre} for related results. For example, \cite[Theorems 3.1.]{Veselic} states that if $H_0$ has a spectral gap $(-\delta,\delta)$ and $a+b\,\delta<\delta$, then 
\[
(-\delta+a+b\,\delta,\delta-a-b\,\delta)+\I\R\subset \rho(H).
\]

\subsection{Spectral projections}

Let $S(X\to X)$ be an operator in a Banach space $X$, and let $Q_{\pm}\in\LL(X)$ be a pair of complementary projections, i.e.\ $Q_++Q_-=I$. Then $S$ is said to be \emph{decomposed} according to 
\beq\label{eq. decomp. of X}
X=Q_+X\dotplus Q_-X
\eeq
(compare \cite[III.5.6.]{Ka}) if
\beq\label{SQcommute}
Q_{\pm}\dom(S)\subset\dom(S),\quad SQ_{\pm}x=Q_{\pm}S x,\quad x\in\dom(S).
\eeq
With respect to the decomposition \eqref{eq. decomp. of X}, $S$ is then block-diagonal, 
\[
S=\begin{pmatrix}S_+&0\\0&S_-\end{pmatrix},
\]
where $S_{\pm}:=S|_{Q_{\pm}X}$ denote the parts of $S$ in $Q_{\pm}X$. Clearly, $\sigma(S)=\sigma(S_+)\cup\sigma(S_-)$, and $S_{\pm}$ are closed, densely defined etc.\ if and only if $S$ is. We are interested in the case where the union is disjoint; in particular, when $\I\R\subset\rho(S)$ and  
\beq\label{eq. separation of spectra}
\sigma(S_{\pm})=\sigma(S)\cap \C_{\pm},\quad \C_{\pm}:=\set{z\in\C}{\pm\,\re z>0}.
\eeq
If at least one of the the sets $\sigma(S)\cap\C_{\pm}$ is bounded, then $Q_{\pm}$ may be defined by the Riesz-Dunford functional calculus. If $S$ is a self-adjoint operator in a Hilbert space, then this may be accomplished by the self-adjoint functional calculus, even if both sets are unbounded. In either case, 
\beq\label{eq. diff. of proj}
\frac{1}{\pi\I}\int_{\I\R}^{'}(S-z)^{-1}\,\rd z x=Q_+x-Q_-x,\quad x\in X.
\eeq
In the general case, the problem of separating the spectrum at infinity arises. 
The following theorem was proved in \cite{LT} and is based on \cite[Theorem XV.3.1.]{GGK1}. We slightly simplify the assumptions stated in \cite[Theorem 1.1.]{LT}; they are equivalent by a straightforward Neumann series argument.

\begin{theorem}\label{LT GGK}
Let $S$ be a closed, densely defined operator in a Banach space $X$ such that $\I\R\subset\rho(S)$, $\lim_{|\eta|\to\infty}\|(S-\I\eta)^{-1}\|=0$ and
\[
\frac{1}{\pi\I}\int_{\I\R}^{'}(S-z)^{-1}\,\rd z 
\]
exists in the strong operator topology. Then there exist complementary projections $Q_{\pm}$ in $X$ such that $S$ is decomposed according to $X=Q_+X\dotplus Q_-X$ and such that \eqref{eq. separation of spectra}--\eqref{eq. diff. of proj} hold.
\end{theorem}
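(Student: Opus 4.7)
\emph{Setup and reduction.} I would define $T := \frac{1}{\pi\I}\int_{\I\R}'(S-z)^{-1}\,\rd z\in\LL(X)$ via the strong limit guaranteed by the hypothesis, and set $Q_{\pm}:=\tfrac12(I\pm T)$, so that trivially $Q_++Q_-=I$ and $Q_+-Q_-=T$, i.e.\ \eqref{eq. diff. of proj} holds by construction. The theorem then reduces to three claims: (i) $T$ commutes with $S$ in the sense of \eqref{SQcommute}; (ii) $T^2=I$, so that $Q_\pm$ are complementary projections with $Q_+Q_-=Q_-Q_+=0$; and (iii) the parts $S_\pm=S|_{Q_{\pm}X}$ satisfy \eqref{eq. separation of spectra}.

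\emph{Commutation with $S$.} For the truncation $T_R:=\frac{1}{\pi\I}\int_{-\I R}^{\I R}(S-z)^{-1}\,\rd z$ and any $\lambda\in\rho(S)$, the first resolvent identity gives $(S-w)^{-1}(S-\lambda)^{-1}=(S-\lambda)^{-1}(S-w)^{-1}$ pointwise in $w$, so $T_R$ commutes with $(S-\lambda)^{-1}$. Passing to the strong limit $T=\slim_R T_R$ preserves this, and $T(S-\lambda)^{-1}=(S-\lambda)^{-1}T$ implies $T\dom(S)\subset\dom(S)$ together with $ST=TS$ on $\dom(S)$, which establishes (i).

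\emph{The identity $T^2=I$ and spectral separation.} For (ii), following \cite[Thm.~XV.3.1]{GGK1}, I would shift the inner contour from $\I\R$ to $\I\R+\delta$ for some small $\delta>0$ (permissible since $\rho(S)$ is open around $\I\R$, and Cauchy's theorem combined with the decay $\|(S-\I\eta)^{-1}\|\to0$ shows the shift preserves $T$), and apply the first resolvent identity to write
\[
T^{2} = \frac{1}{(\pi\I)^{2}}\int_{\I\R}'\!\!\int_{\I\R+\delta}\frac{(S-w)^{-1}-(S-z)^{-1}}{w-z}\,\rd w\,\rd z.
\]
With the two contours no longer meeting, Fubini is legitimate, and the two summands reduce to $T$-type operator integrals weighted by the scalar principal-value integrals $\tfrac{1}{\pi\I}\int_{\Gamma}(w-z)^{-1}\,\rd w$, which evaluate to $\pm1$ depending on the side of $\Gamma$ containing the pole; reassembly yields $T^2=I$. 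For (iii), once $Q_\pm$ decompose $S$ and $\I\R\subset\rho(S)$, one has $\sigma(S)=\sigma(S_+)\cup\sigma(S_-)$ disjoint from $\I\R$; the missing inclusions $\sigma(S_{\mp})\cap\C_{\pm}=\emptyset$ are obtained by constructing, for $\lambda\in\C_{\pm}$, a bounded inverse of $S_{\mp}-\lambda$ on $Q_{\mp}X$ via an analogous contour integral of $(z-\lambda)^{-1}(S-z)^{-1}$ whose convergence is again controlled by the decay hypothesis. The main obstacle is the rigorous justification of the contour shift and Fubini in the $T^2$ computation: only strong convergence is available for the defining principal-value integral, and only the qualitative decay $\|(S-\I\eta)^{-1}\|\to0$ (no rate) is assumed, so the deformation and interchange of integrals must be executed on truncations $T_R$ with the limits $R\to\infty$ and $\delta\to0$ taken in a compatible order.
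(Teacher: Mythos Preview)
The paper does not give its own proof of this theorem; it is quoted as \cite[Theorem~1.1]{LT}, itself based on \cite[Theorem~XV.3.1]{GGK1}, with only the remark that the simplified hypotheses here are equivalent to those in \cite{LT} ``by a straightforward Neumann series argument.'' Your sketch follows exactly the strategy of those references: set $Q_\pm=\tfrac12(I\pm T)$, verify commutation via resolvents, prove $T^2=I$ by shifting one contour off $\I\R$ and using the first resolvent identity, and then deduce the spectral separation from a similar contour integral. So there is nothing to compare against in the paper itself, and relative to the cited sources your approach is the intended one.

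You have also correctly isolated the only real difficulty: justifying the contour shift and the interchange of integrals when the defining integral converges only strongly and the resolvent decay is merely qualitative. One point worth making explicit in your write-up is that a uniform vertical strip $\{|\re z|<\delta\}\subset\rho(S)$ actually exists: for large $|\eta|$ the Neumann series gives $\I\eta+\zeta\in\rho(S)$ whenever $|\zeta|<\|(S-\I\eta)^{-1}\|^{-1}$, and for $|\eta|$ bounded one uses compactness of $\I[-R,R]$ inside the open set $\rho(S)$. This is precisely the Neumann series argument the paper invokes to pass between its hypotheses and those of \cite{LT}, and it is what legitimizes the contour shift you need for the $T^2=I$ computation.
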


\begin{remark}
We call $Q_{\pm}$ the \emph{spectral projections} corresponding to the right and left half planes $\C_{\pm}$. We remark that $S$ is bisectorial under the stated conditions and that the spectral projections may in principle also be defined by the functional calculus for such operators, see e.g.\ \cite{McIntosh86}. However, the two notions need not coincide; in particular, the spectral projections defined by the functional calculus may be unbounded.
\end{remark}

We will need the following perturbation result:

\begin{theorem}\label{th. integral of resolvent difference exists in norm}
Assume Hypothesis \rref{hypothesis form bounded} and that \eqref{rel. form bounded} holds with $a,b\geq 0$, $b<1$. Then there exists a unique closed densely defined operator $H$ which is associated to the quadratic form $\mathfrak{h}=\mathfrak{h}_0+\mathfrak{v}$; moreover, $\dom(H)$ is a core for $|H_0|^{1/2}$.
If $\,\I\R\subset\rho(H_0)\cap\rho(H)$, then the assertions of Theorem \rref{LT GGK} hold for $H$.
\end{theorem}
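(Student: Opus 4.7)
My plan has two stages: first establish existence and uniqueness of $H$ from the pseudo-Friedrichs construction of \cite{Veselic}, then verify the three hypotheses of Theorem~\rref{LT GGK} for $S=H$. For the first stage I need only produce some $z_0\in\rho(H_0)$ at which $\widehat{C}_{a,b}(z_0)$ has bounded inverse. Taking $z_0=\I\eta$ and writing
\[
\widehat{C}_{a,b}(\I\eta) = A(\I\eta)\bigl[I + A(\I\eta)^{-1}C_{a,b}\bigr],\quad A(z):=(H_0-z)H_{a,b}^{-1},
\]
the spectral theorem applied to $\lambda\mapsto (a+b|\lambda|)/\sqrt{\lambda^2+\eta^2}$ gives $\|A(\I\eta)^{-1}\| = \|H_{a,b}(H_0-\I\eta)^{-1}\| \to b$ as $|\eta|\to\infty$. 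Combined with $\|C_{a,b}\|\le 1$ and the hypothesis $b<1$, the bracket is Neumann-invertible for all $|\eta|$ sufficiently large, and the existence, uniqueness, and the core property of $\dom(H)$ are then \cite[Theorem~2.4]{Veselic}.

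Assume now $\I\R\subset\rho(H_0)\cap\rho(H)$. Hypothesis \textrm{(i)} of Theorem~\rref{LT GGK} is immediate. For the decay in \textrm{(ii)}, substituting $\widehat{C}_{a,b}(\I\eta)^{-1} = \bigl[I+H_{a,b}(H_0-\I\eta)^{-1}C_{a,b}\bigr]^{-1}H_{a,b}(H_0-\I\eta)^{-1}$ into \eqref{resolvent H form construction} produces
\[
(H-\I\eta)^{-1} = H_{a,b}^{-1/2}\bigl[I+H_{a,b}(H_0-\I\eta)^{-1}C_{a,b}\bigr]^{-1}H_{a,b}^{1/2}(H_0-\I\eta)^{-1},
\]
in which $H_{a,b}^{-1/2}$ is bounded (as $0\in\rho(H_0)$ by assumption), the bracket is uniformly bounded for large $|\eta|$, and $\|H_{a,b}^{1/2}(H_0-\I\eta)^{-1}\|=O(|\eta|^{-1/2})\to 0$ by the spectral theorem applied to $\lambda\mapsto(a+b|\lambda|)^{1/2}/\sqrt{\lambda^2+\eta^2}$.

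For the integral in \textrm{(iii)} I would split
\[
\int^{'}_{\I\R}(H-z)^{-1}\,\rd z = \int^{'}_{\I\R}(H_0-z)^{-1}\,\rd z + \int^{'}_{\I\R}\bigl[(H-z)^{-1}-(H_0-z)^{-1}\bigr]\,\rd z.
\]
The first integral converges in the strong operator topology to $\pi\I\sgn(H_0)$ by the self-adjoint functional calculus. The main content of the theorem, as its title suggests, is that the second integral converges in \emph{operator norm}. Applying the identity $\widehat{C}_{a,b}(z)^{-1}-A(z)^{-1} = -A(z)^{-1}C_{a,b}\widehat{C}_{a,b}(z)^{-1}$ and sandwiching with $H_{a,b}^{-1/2}$ on both sides rewrites the integrand as
\[
-R(z)\,C_{a,b}\bigl[I+H_{a,b}(H_0-z)^{-1}C_{a,b}\bigr]^{-1}R(z),\qquad R(z):=H_{a,b}^{1/2}(H_0-z)^{-1},
\]
whose norm is only $O(|\eta|^{-1})$ on $z=\I\eta$.

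The main obstacle is thus the last step: since $1/|\eta|$ is not absolutely integrable at infinity, the convergence of the principal-value integral has to come from cancellations. My plan is to use the spectral calculus of $H_0$ to split $R(\I\eta)$ into its part $H_{a,b}^{1/2}H_0/(H_0^2+\eta^2)$, which is even in $\eta$, and its part $\I\eta H_{a,b}^{1/2}/(H_0^2+\eta^2)$, which is odd in $\eta$; the contributions of mixed parity to $R(\I\eta)M(\I\eta)R(\I\eta)$ then vanish under the symmetric Cauchy limit. For the leading-order piece obtained by replacing the bracket $\bigl[I+H_{a,b}(H_0-z)^{-1}C_{a,b}\bigr]^{-1}$ by $I$, the matched-parity contributions combine into a double-operator integral with kernel $\pi(\sgn\lambda-\sgn\mu)(a+b|\lambda|)^{1/2}(a+b|\mu|)^{1/2}/(\mu-\lambda)$, supported on $\{\lambda\mu<0\}$ and defining a bounded operator by a Schur-multiplier estimate. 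Each higher-order term in the Neumann expansion of the bracket acquires an extra factor $H_{a,b}(H_0-\I\eta)^{-1}C_{a,b}$ of operator norm bounded asymptotically by $b<1$, together with an additional $(H_0-\I\eta)^{-1}$ producing $O(|\eta|^{-1})$ extra decay, so the remaining terms are absolutely norm-integrable and the series sums to the claimed norm-convergent principal-value integral.
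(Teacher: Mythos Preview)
Your existence argument and the verification of $\|(H-\I\eta)^{-1}\|\to 0$ are fine; the gap is in the last paragraph. You claim that each higher-order Neumann term acquires ``an additional $(H_0-\I\eta)^{-1}$ producing $O(|\eta|^{-1})$ extra decay'', but this is false: passing from order $n$ to $n+1$ in the expansion of $\bigl[I+H_{a,b}(H_0-\I\eta)^{-1}C_{a,b}\bigr]^{-1}$ inserts exactly the factor $H_{a,b}(H_0-\I\eta)^{-1}C_{a,b}$, whose operator norm tends to $b$ as $|\eta|\to\infty$ --- the resolvent decay is entirely compensated by $H_{a,b}$. Hence every term in the series has operator norm of exact order $|\eta|^{-1}$, coming solely from the two outer factors $R(\I\eta)$, and the tail is \emph{not} absolutely norm-integrable. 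To salvage your route you would have to run the parity/double-operator-integral analysis for each order separately, now with $n+2$ resolvent factors interleaved by $n+1$ copies of $C_{a,b}$; this is substantially harder than the leading case and you have not indicated how it would go.

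The paper's proof sidesteps the issue by bounding matrix elements rather than operator norms. Writing the resolvent difference as $G_{a,b}(\I\eta)\bigl[\sum_{n\ge 1}D_n(\I\eta)\bigr]G_{a,b}(\I\eta)$ with $G_{a,b}(z)=(H_0-z)^{-1}H_{a,b}^{1/2}$ and $\sup_\eta\bigl\|\sum_n D_n(\I\eta)\bigr\|\le(1-\widetilde b)^{-1}$, one obtains
\[
\bigl|\bigl([(H-\I\eta)^{-1}-(H_0-\I\eta)^{-1}]u,v\bigr)\bigr|\le (1-\widetilde b)^{-1}\,\|G_{a,b}(\I\eta)u\|\,\|G_{a,b}(\I\eta)v\|,
\]
and Cauchy--Schwarz in $\eta$ reduces everything to $\int_{\R}\|G_{a,b}(\I\eta)u\|^2\,\rd\eta$, which the spectral theorem evaluates as $\pi\int(a+b|\lambda|)|\lambda|^{-1}\,\rd(E_\lambda u,u)\le\pi(a\|H_0^{-1}\|+b)\|u\|^2$. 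This yields absolute integrability of the matrix elements uniformly on the unit ball, hence norm convergence of the principal-value integral --- no cancellations needed. In fact your Schur-multiplier estimate for the leading term is precisely this $L^2(\rd\eta)$ computation in disguise; the point you missed is that it applies to the full sum at once, because the middle factor stays uniformly bounded in $\eta$.
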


\begin{proof}
The first part was proved in \cite[Theorem 2.11]{Veselic}.

\medskip
To show that $H$ satisfies the assumptions of of Theorem \rref{LT GGK}, we prove that the integral
\beqnt
\int_{-\infty}^{\infty '}\left((H-\I\eta)^{-1}-(H_0-\I\eta)^{-1}\right)\,\rd \eta,
\eeqnt
exists in the norm operator topology and that
\beq\label{eq. resolvent decay}
\sup_{\eta\in\R}|\eta|\left\|(H-\I\eta)^{-1}-(H_0-\I\eta)^{-1}\right\|<\infty.
\eeq

Since the assumptions of Theorem \ref{LT GGK} are obviously satisfied for the self-adjoint operator $H_0$, it then follows that the same holds true for $H$, by virtue of the identity
\[
(H-\I\eta)^{-1}=(H_0-\I\eta)^{-1}+\left((H-\I\eta)^{-1}-(H_0-\I\eta)^{-1}\right).
\]

\medskip
By the spectral theorem for self-adjoint operators 
\beq\label{eq. CH0Hableqsup}
\|(H_0-\I\eta)^{-1}H_{a,b}\|\leq\sup_{|t|\geq\delta}\frac{a+b|t|}{\sqrt{t^2+\eta^2}},\quad \eta\in\R.
\eeq
A straightforward computation yields for the supremum above (see e.g.\ \cite{Veselic})
\beq\label{eq. supremum}
 \sup_{|t|\geq\delta}\frac{a+b|t|}{\sqrt{t^2+\eta^2}}=\frac{1}{|\eta|}\sqrt{a^2+b^2\eta^2},\quad \eta\in\R.
\eeq
Since $b<1$, it follows from \eqref{eq. CH0Hableqsup} and \eqref{eq. supremum} that for each $\widetilde{b}\in(b,1)$ there exists $R>0$ such that 
\beq\label{eq. eq. CH0Hableqbprime}
\|(H_0-\I\eta)^{-1}H_{a,b}\|\leq \widetilde{b},\quad \eta\in\R,\,|\eta|\geq R
\eeq
We may assume without loss of generality that \eqref{eq. eq. CH0Hableqbprime} holds for all $\eta\in\R$. Otherwise, the existence in norm of the two integrals

\beqnt
\int_{-R}^{R}\left((H-\I\eta)^{-1}-(H_0-\I\eta)^{-1}\right)\,\rd \eta, \quad  \int_{|\eta|\geq R}^{'}\left((H-\I\eta)^{-1}-(H_0-\I\eta)^{-1}\right)\,\rd \eta.
\eeqnt
is shown separately. But since the integrand is continuous as function of $\eta\in\R$, the first integral above always exists, while for the second, \eqref{eq. eq. CH0Hableqbprime} holds.

By \eqref{resolvent H form construction}, we have for all $\eta\in\R$
\beqnt\begin{split}
 (H-\I\eta)^{-1}&=H_{a,b}^{-1/2}\left((H_0-\I\eta)H_{a,b}^{-1}+C_{a,b}\right)^{-1}H_{a,b}^{-1/2}\\ 
&=(H_0-\I\eta)^{-1}H_{a,b}^{1/2}\left(1+C_{a,b}(H_0-\I\eta)^{-1}H_{a,b}\right)^{-1}H_{a,b}^{-1/2}\\
&=(H_0-\I\eta)^{-1}H_{a,b}^{1/2}\sum_{n=0}^{\infty}\left[-C_{a,b}(H_0-\I\eta)^{-1}H_{a,b}\right]^{n} H_{a,b}^{-1/2}\\[-2mm]
&=(H_0-\I\eta)^{-1}+(H_0-\I\eta)^{-1}H_{a,b}^{1/2}\sum_{n=1}^{\infty}\left[-C_{a,b}(H_0-\I\eta)^{-1}H_{a,b}\right]^{n} H_{a,b}^{-1/2}.
\end{split}\eeqnt
The sum above converges absolutely by \eqref{eq. eq. CH0Hableqbprime} and because $\|C_{a,b}\|\leq 1$. Hence,
\begin{align}\label{eq. diff. of resolvents with Dn}
 (H-\I\eta)^{-1}-(H_0-\I\eta)^{-1}&=(H_0-\I\eta)^{-1}H_{a,b}^{1/2}\sum_{n=1}^{\infty}\left[-C_{a,b}(H_0-\I\eta)^{-1}H_{a,b}\right]^{n} H_{a,b}^{-1/2}\notag\\
&=(H_0-\I\eta)^{-1}H_{a,b}^{1/2}\sum_{n=1}^{\infty} D_n(\I\eta) (H_0-\I\eta)^{-1} H_{a,b}^{1/2},
\end{align}
where
\[
 D_n(z):=\left[-C_{a,b}(H_0-z)^{-1}H_{a,b}\right]^n (H_0-z)H_{a,b}^{-1},\quad z\in\rho(H_0).
\]
Note that $D_n(z)$ contains $n$ factors of $C_{a,b}$ and $n-1$ factors of $(H_0-z)^{-1}H_{a,b}$. Therefore, by \eqref{eq. eq. CH0Hableqbprime},
\[
 \|D_n(z)\|\leq \|C_{a,b}\|^n\|(H_0-z)^{-1}H_{a,b}\|^{n-1}\leq \widetilde{b}^{n-1}.
\]
We set 
\[
 G_{a,b}(z):=(H_0-z)^{-1}H_{a,b}^{1/2}\in\LL(\H),\quad z\in\rho(H_0).
\]
Then, for $u,v\in\H$,
\begin{align}\label{normZAZB}
\left|\left(\left[(H-\I\eta)^{-1}-(H_0-\I\eta)^{-1}\right]u,v\right)\right|
&=\left|\left(\sum_{n=1}^{\infty}D_n(\I\eta) G_{a,b}(\I\eta)u,G_{a,b}(\I\eta)v\right)\right|\notag\\
&\leq(1-\widetilde{b})^{-1} \|G_{a,b}(\I\eta)u\|\,\|G_{a,b}(\I\eta)v\|.
\end{align}

Let $(\rho_n)_{n\in\N}\subset(0,\infty)$ be such that $\rho_n\to\infty$, and define $\{T_n\}_{n\in\N}$ by
\[
 T_n:=\frac{1}{\pi\I}\int_{-\rho_n}^{\rho_n}\left((H-\I\eta)^{-1}-(H_0-\I\eta)^{-1}\right)\,\rd \eta.
\]
Note that, since the integrand is norm-continuous, the integral exists in norm.  
By~\eqref{normZAZB} and the Cauchy-Schwarz inequality, we have for all $u,v\in\H$, 
\[\begin{split}
&\int_{-\rho_n}^{\rho_n}\left|\left(\left[(H-\I\eta)^{-1}-(H_0-\I\eta)^{-1}\right]u,v\right)\right|\,\rd \eta\\
&\leq(1-\widetilde{b})^{-1}\int_{-\rho_n}^{\rho_n}\|G_{a,b}(\I\eta)u\|\,\|G_{a,b}(\I\eta)v\|\,\rd \eta\\
&\leq(1-\widetilde{b})^{-1}\left(\int_{-\rho_n}^{\rho_n}\|G_{a,b}(\I\eta)u\|^2\,\rd \eta\right)^{1/2}
\left(\int_{-\rho_n}^{\rho_n}\|G_{a,b}(\I\eta)v\|^2\,\rd \eta\right)^{1/2}\\[2mm]
&=\pi(1-\widetilde{b})^{-1}\left(a\,\|H_0^{-1}\|+b\right)\,\|u\|\,\|v\|,
\end{split}\]
where the last equality is a consequence of the spectral theorem. Therefore, $\{(T_n u,v)\}_{n\in\N}$ converges uniformly for $u,v$ in the unit ball of $\H$. By \cite[p.~150]{Ka}, it follows that $\{T_n\}_{n\in\N}$ converges in norm.

Another application of the spectral theorem yields, using \eqref{normZAZB},
\[\begin{split}
 &\|(H-\I\eta)^{-1}-(H_0-\I\eta)^{-1}\|=\sup_{\|u\|=\|v\|=1}\left|\left(\left[(H-\I\eta)^{-1}-(H_0-\I\eta)^{-1}\right]u,v\right)\right|\\
&\leq(1-\widetilde{b})^{-1}\|G_{a,b}(\I\eta)\|^2=(1-\widetilde{b})^{-1}\sup_{t\in\sigma(H_0)}\frac{a+b|t|}{|\eta|+t^2|\eta|^{-1}}\,\frac{1}{|\eta|}\\
&\leq(1-\widetilde{b})^{-1}\sup_{t\in\sigma(H_0)}\frac{a+b|t|}{2|t|}\,\frac{1}{|\eta|}\leq(1-\widetilde{b})^{-1}\left(\frac{a}{2}\,\|H_0^{-1}\|+b\right)\,\frac{1}{|\eta|}.
\end{split}\]
This proves \eqref{eq. resolvent decay}.
\end{proof}

\subsection{Graph subspaces and angular operators}

\begin{definition}
Let $X,Y$ be Banach spaces and $Z=X\dotplus Y$. A subspace $\mathcal{L}\subset Z$ is called a \emph{graph subspace} with respect to $X$ if there exists an operator $A_X\in\LL(X,Y)$ such that
\[
\mathcal{L}=\set{x+A_X x}{x\in X}.
\]
In this case, $A_X$ is called the \emph{angular operator} of $\mathcal{L}$ with respect to $X$. 
\end{definition}

For simplicity, we shall also call a subspace $\mathcal{M}\subset Y$ of the form
\[
\mathcal{M}=\set{A_Y y+y}{y\in Y},
\]
$A_Y\in\LL(Y,X)$, a graph subspace, although the term ´´inverse graph subspace" would be more appropriate. 

\begin{remark}
Let $P_X$ be the projection of $Z$ onto $X$ along $Y$ and $P_Y=I_Z-X$. It is easy to see that $\mathcal{L}\subset Z$ is a graph subspace with respect to $X$ if and only if
\[
P_X|_{\mathcal{L}}:\mathcal{L}\to X
\]
is an isomorphism and that the angular operator $A_X$ is given by
\[
A_X=P_Y(P_X|_{\mathcal{L}})^{-1}.
\]
\end{remark}

\begin{proposition}\label{proposition inverse W}
Let $X,Y$ be Banach spaces, $Z=X\dotplus Y$, and let $\mathcal{L}, \mathcal{M}\subset Z$ be graph subspaces with respect to $X$ and $Y$, with angular operators $A_X\in\LL(X,Y)$ and $A_Y\in\LL(Y,X)$, respectively. Then the following are equivalent.
\begin{itemize}
\item[\rm i)] $I_X-A_Y A_X$ has a bounded inverse;
\item[\rm ii)] $I_Y-A_X A_Y$ has a bounded inverse;
\item[\rm iii)] The operator
\[
W:=\begin{pmatrix}I_X&A_Y\\A_X&I_Y\end{pmatrix}\in \LL(Z)
\]
has a bounded inverse, with
\beq\label{eq. Winv.}
W^{-1}=\begin{pmatrix}(I_X-A_Y A_X)^{-1}&-(I_X-A_Y A_X)^{-1}A_Y\\-(I_Y-A_X A_Y)^{-1}A_X&(I_Y-A_X A_Y)^{-1}\end{pmatrix}\in \LL(Z)
\eeq
\item[\rm iv)] $\mathcal{L}\dotplus \mathcal{M}=Z$.
\end{itemize}
\end{proposition}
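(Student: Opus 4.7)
The plan is to establish the chain (iv) $\Leftrightarrow$ (iii) $\Leftrightarrow$ (i), with (i) $\Leftrightarrow$ (ii) emerging along the way; the explicit formula for $W^{-1}$ in (iii) will drop out of the argument for (iii) $\Leftrightarrow$ (i)/(ii).

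First I would handle (iii) $\Leftrightarrow$ (iv). The direct sum decomposition $Z = X \dotplus Y$ gives a canonical topological isomorphism $Z \cong X \oplus Y$, under which $W$ is a bounded operator on $Z$. A direct computation shows that for $(x,y) \in X \oplus Y$,
\begin{equation*}
W\begin{pmatrix} x \\ y \end{pmatrix} = (x + A_Y y) + (A_X x + y) = (x + A_X x) + (A_Y y + y),
\end{equation*}
where the two right-hand summands lie in $\mathcal{L}$ and $\mathcal{M}$, respectively. Hence $\Ran(W) = \mathcal{L} + \mathcal{M}$, while $W$ is injective iff $\mathcal{L} \cap \mathcal{M} = \{0\}$ (since $\mathcal{L}$, $\mathcal{M}$ are graph subspaces with respect to complementary projections, any vector in the intersection must come from a unique pair $(x,y)$ in $\ker W$). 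By the bounded inverse theorem in $Z$, $W$ has a bounded inverse in $\LL(Z)$ iff it is a bijection, which is exactly condition (iv).

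For (iii) $\Leftrightarrow$ (i) and (iii) $\Leftrightarrow$ (ii) I would exploit the two block LDU factorizations
\begin{equation*}
W = \begin{pmatrix} I_X & A_Y \\ 0 & I_Y \end{pmatrix} \begin{pmatrix} I_X - A_Y A_X & 0 \\ A_X & I_Y \end{pmatrix} = \begin{pmatrix} I_X & 0 \\ A_X & I_Y \end{pmatrix} \begin{pmatrix} I_X & A_Y \\ 0 & I_Y - A_X A_Y \end{pmatrix},
\end{equation*}
both obtained by elementary block row/column elimination. The outer unipotent factors are invertible in $\LL(Z)$ by inspection, so $W$ is invertible iff $I_X - A_Y A_X$ is, iff $I_Y - A_X A_Y$ is. This yields (i) $\Leftrightarrow$ (iii) $\Leftrightarrow$ (ii) simultaneously. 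Inverting either factorization gives an explicit formula for $W^{-1}$; to cast it in the symmetric form stated in (iii), one invokes the intertwining identity $A_Y (I_Y - A_X A_Y)^{-1} = (I_X - A_Y A_X)^{-1} A_Y$, checked by multiplying through by both inverses.

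The main obstacle is essentially bookkeeping: verifying the two LDU factorizations, computing the inverse of each block triangular factor, and checking that the top-left entry of $W^{-1}$ obtained from the second factorization, namely $I_X + A_Y(I_Y - A_X A_Y)^{-1} A_X$, agrees with $(I_X - A_Y A_X)^{-1}$. All of these are routine manipulations in the Banach algebra $\LL(Z)$, so no genuine analytic difficulty arises.
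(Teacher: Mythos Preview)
Your proposal is correct and follows essentially the same approach as the paper: the equivalence of (i)--(iii) via Schur--Frobenius/block LDU factorization of $W$, and (iii)$\Leftrightarrow$(iv) via the observation that $W(x,y)=(x+A_Xx)+(A_Yy+y)$ together with the closed graph/bounded inverse theorem. The only cosmetic difference is that the paper writes a single three-factor Schur--Frobenius decomposition (with the diagonal block $I_Y-A_XA_Y$) and says ``for example'', whereas you give the two two-factor variants explicitly; these are the same computation.
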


\begin{proof}
The equivalence of i)-iii) follows from the Schur-Frobenius factorization; for example
\[
\begin{pmatrix}I_X&A_Y\\A_X&I_Y\end{pmatrix}=\begin{pmatrix}I_X&0\\A_X&I_Y\end{pmatrix}\begin{pmatrix}I_X&0\\0&I_Y-AX A_Y\end{pmatrix}\begin{pmatrix}I_X&A_Y\\0&I_Y\end{pmatrix}.
\]
The formula \eqref{eq. Winv.} is easily verified by a direct computation.
To prove the equivalence of iii) and iv), we observe that since
\[
 W\begin{pmatrix}x\\y\end{pmatrix}=\begin{pmatrix}x\\A_X x\end{pmatrix}+\begin{pmatrix}A_Y y\\y\end{pmatrix},
\]
$W$ is surjective if and only if $\mathcal{L}+\mathcal{M}=\H$, and $W$ is injective if and only if $\mathcal{L}\cap \mathcal{M}=\emptyset$. An application of the closed graph theorem completes the proof.
\end{proof}

In the Hilbert space situation, we have the following useful proposition see e.g.\ \cite[Corollary 3.4]{KMM03}.

\begin{proposition}\label{graph subspace-pair of projections}
Let $\H$ be a Hilbert space and $\mathcal{L},\mathcal{M}\subset\H$ subspaces with corresponding orthogonal projections $P_{\mathcal{L}}$, $P_{\mathcal{M}}$. Then $\|P_{\mathcal{L}}-P_{\mathcal{M}}\|<1$ if and only if $\mathcal{M}$ is the graph of an operator $K\in\LL(\mathcal{L},\mathcal{L}^\perp)$.
In this case
\beq
\|K\|=\frac{\|P_{\mathcal{L}}-P_{\mathcal{M}}\|}{\sqrt{1-\|P_{\mathcal{L}}-P_{\mathcal{M}}\|^2}},\quad \|P_{\mathcal{L}}-P_{\mathcal{M}}\|=\frac{\|K\|}{\sqrt{1+\|K\|^2}}.
\eeq
\end{proposition}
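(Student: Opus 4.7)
The plan is to analyze the self-adjoint difference $D := P_{\mathcal{L}} - P_{\mathcal{M}}$ through its square. A direct expansion gives
\[
D^{2}=P_{\mathcal{L}}+P_{\mathcal{M}}-P_{\mathcal{L}}P_{\mathcal{M}}-P_{\mathcal{M}}P_{\mathcal{L}},
\]
from which a short calculation yields $P_{\mathcal{L}}D^{2}=D^{2}P_{\mathcal{L}}$. Hence $D^{2}$ preserves the splitting $\H=\mathcal{L}\oplus\mathcal{L}^{\perp}$, and evaluating on each summand produces
\[
D^{2}|_{\mathcal{L}}=I_{\mathcal{L}}-P_{\mathcal{L}}P_{\mathcal{M}}|_{\mathcal{L}},\qquad D^{2}|_{\mathcal{L}^{\perp}}=P_{\mathcal{L}^{\perp}}P_{\mathcal{M}}|_{\mathcal{L}^{\perp}}.
\]
Since $D$ is self-adjoint, $\|D\|^{2}=\|D^{2}\|=\max\{\|D^{2}|_{\mathcal{L}}\|,\|D^{2}|_{\mathcal{L}^{\perp}}\|\}$, so both directions of the equivalence reduce to computing these two restrictions.

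For the direction $\|D\|<1\;\Rightarrow\;\mathcal{M}$ is a graph over $\mathcal{L}$: injectivity of $P_{\mathcal{L}}|_{\mathcal{M}}$ follows from the observation that $y\in\mathcal{M}$ with $P_{\mathcal{L}}y=0$ forces $\|y\|=\|Dy\|\leq\|D\|\|y\|$; surjectivity follows because $I_{\mathcal{L}}-D^{2}|_{\mathcal{L}}=P_{\mathcal{L}}P_{\mathcal{M}}|_{\mathcal{L}}$ is invertible on $\mathcal{L}$, and for $x\in\mathcal{L}$ the vector $y:=P_{\mathcal{M}}(I_{\mathcal{L}}-D^{2}|_{\mathcal{L}})^{-1}x\in\mathcal{M}$ satisfies $P_{\mathcal{L}}y=x$. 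Thus $P_{\mathcal{L}}|_{\mathcal{M}}\colon\mathcal{M}\to\mathcal{L}$ is a bounded bijection, and defining $Kx:=P_{\mathcal{L}^{\perp}}(P_{\mathcal{L}}|_{\mathcal{M}})^{-1}x$ gives the required operator $K\in\LL(\mathcal{L},\mathcal{L}^{\perp})$ with $\mathcal{M}=\{x+Kx:x\in\mathcal{L}\}$, by the closed graph theorem.

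For the converse and the explicit norm identities: assuming $\mathcal{M}=\{x+Kx:x\in\mathcal{L}\}$, I would solve the orthogonality equations $\langle\xi-u-Ku,v+Kv\rangle=0$ ($v\in\mathcal{L}$) characterising $P_{\mathcal{M}}\xi=u+Ku$. This yields $u=(I_{\mathcal{L}}+K^{*}K)^{-1}(P_{\mathcal{L}}\xi+K^{*}P_{\mathcal{L}^{\perp}}\xi)$, hence
\[
D^{2}|_{\mathcal{L}}=I_{\mathcal{L}}-(I_{\mathcal{L}}+K^{*}K)^{-1}.
\]
The analogous computation on $\mathcal{L}^{\perp}$, combined with the identity $K(I_{\mathcal{L}}+K^{*}K)^{-1}=(I_{\mathcal{L}^{\perp}}+KK^{*})^{-1}K$, gives $D^{2}|_{\mathcal{L}^{\perp}}=I_{\mathcal{L}^{\perp}}-(I_{\mathcal{L}^{\perp}}+KK^{*})^{-1}$. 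Since $\|K^{*}K\|=\|KK^{*}\|=\|K\|^{2}$, the spectral theorem applied to the nonnegative operators $K^{*}K$ and $KK^{*}$ yields $\|D\|^{2}=\|K\|^{2}/(1+\|K\|^{2})<1$, and solving for $\|K\|$ produces the second formula.

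The main delicacy is not any individual step but the bookkeeping: consistently distinguishing global operators on $\H$ from their restrictions to $\mathcal{L}$ or $\mathcal{L}^{\perp}$, and verifying the polar-type identity $K(I+K^{*}K)^{-1}=(I+KK^{*})^{-1}K$ that links the two half-space computations. Once those algebraic manipulations are in place, everything else is routine functional calculus for bounded self-adjoint operators.
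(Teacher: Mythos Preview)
Your argument is correct. The paper does not actually prove this proposition; it simply cites \cite[Corollary~3.4]{KMM03} and states the result. So there is no ``paper's own proof'' to compare against --- you have supplied what the paper omits.

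A couple of minor remarks on presentation. First, the boundedness of $K$ follows immediately from the open mapping theorem applied to the bounded bijection $P_{\mathcal{L}}|_{\mathcal{M}}:\mathcal{M}\to\mathcal{L}$; invoking the closed graph theorem is equivalent but slightly roundabout here. Second, in the computation of $D^{2}|_{\mathcal{L}^{\perp}}$ you might want to spell out the intermediate step: for $\xi\in\mathcal{L}^{\perp}$ one gets $u=(I+K^{*}K)^{-1}K^{*}\xi$, hence $P_{\mathcal{L}^{\perp}}P_{\mathcal{M}}\xi=Ku=(I+KK^{*})^{-1}KK^{*}\xi=\xi-(I+KK^{*})^{-1}\xi$, which is exactly $I_{\mathcal{L}^{\perp}}-(I_{\mathcal{L}^{\perp}}+KK^{*})^{-1}$. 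This makes the passage from the polar-type identity to the final formula transparent. Otherwise the bookkeeping is clean and the use of the spectral mapping $t\mapsto t/(1+t)$ to read off $\|D\|^{2}$ is exactly right.
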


It is easy to see that $d(\mathcal{L},\mathcal{M}):=\|P_{\mathcal{L}}-P_{\mathcal{M}}\|$ defines a metric on the set of subspaces of a Hilbert space $\H$. Let us introduce the \emph{angular metric} by
\[
d_{a}(\mathcal{L},\mathcal{M}):=\arcsin \|P_{\mathcal{L}}-P_{\mathcal{M}}\|.
\] 
The fact that $d_{a}$ is indeed a metric (i.e.\ satisfies the triangle inequality) was proven in \cite{Brown93}. The angular metric is related to the operator angle $$\Theta_{\mathcal{L},\mathcal{M}}:=\arcsin\sqrt{I_{\mathcal{L}}-P_{\mathcal{L}}P_{\mathcal{M}}}$$ between the subspaces $\mathcal{L}$ and $\mathcal{M}$ by the equality $d_{a}(\mathcal{L},\mathcal{M})=\|\Theta_{\mathcal{L},\mathcal{M}}\|$.

\subsection{Accretive operators in indefinite inner product spaces}

Our main tool in the proof of Theorems \ref{main theorem 1} and \ref{main theorem 1 sym.} is the following result about accretive operators in indefinite inner product spaces. It is a slight generalization of \cite[Theorem 1.4.]{LT}. 

\begin{definition}
 Let $\H$ be a Hilbert space and $W$ a bounded self-adjoint operator on $\H$. 
\begin{itemize}
 \item[\rm i)] An operator $T(\H\to\H)$ is called $W$-\emph{accretive} if
\[
 \re\, (WTx,x)\geq 0\quad x\in\dom(T).
\]
 \item[\rm ii)] A linear manifold $\mathcal{L}\subset\H$ is called $W$-\emph{nonnegative} {\rm(}$W$-\emph{nonpositive}{\rm)} if 
\[
 (Wx,x)\geq 0\,\,\,{\rm(}\leq 0{\rm)},\quad x\in \mathcal{L}.
\]
\end{itemize}
\end{definition}

\begin{theorem}\label{W-space accretive thm.}
Let $\H$ be a Hilbert space and $W$ a bounded self-adjoint operator on $\H$. Let $T(\H\to\H)$ be a closed, $W$-accretive operator such that $\I\R\setminus\{0\}\subset\rho(T)$. Assume that the integral
\beq\label{eq. integral over resolvent in W-space accretive thm}
\frac{1}{\pi\I}\int_{\I\R}^{'}(T-z)^{-1}\rd z,
\eeq
exists in the weak operator topology and is the difference of two complementary projections $Q_{\pm}~\in~\LL(\H)$,
\beq\label{eq. integral is the difference of projections}
\frac{1}{\pi\I}\int_{\I\R}^{'}(T-z)^{-1}\rd z=Q_+-Q_-.
\eeq
Then $Q_+\H\subset\H$ is a $W$-nonnegative subspace and $Q_-\H\subset\H$ is a $W$-nonpositive subspace.

If $0\in\rho(W)$, we denote by $P_{\pm}$ the spectral projections onto the positive and negative spectral subspace of $W$. Set $\H_{\pm}:=P_{\pm}\H$ and $W_{\pm}:=\pm W|_{\H_{\pm}}$. Then there exist operators $K_{\pm}\in\LL(\H{\pm},\H_{\mp})$ such that 
\beq\label{Graph subspaces}
Q_+\H=\set{x+K_+ x}{x\in P_+\H}, \quad Q_-\H=\set{y+K_- y}{y\in P_-\H}.
\eeq
Moreover,
\[
 \|K_{\pm}\|\leq\sqrt{\frac{\sup\sigma(W_{\pm})}{\inf\sigma(W_{\mp})}}.
\]
\end{theorem}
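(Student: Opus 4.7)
The plan is to establish the two assertions in order: first the $W$-semidefiniteness of $Q_\pm\H$ directly from the integral formula and $W$-accretivity of $T$, then the graph-subspace representation by showing that $Q_+\H$ is a \emph{maximal} $W$-nonnegative subspace in the Krein-space sense. For the first assertion, I would use the weak integral \eqref{eq. integral is the difference of projections} to write, for $x\in Q_+\H$ (so that $(Q_+-Q_-)x=x$),
$$
(Wx,x)=((Q_+-Q_-)x,Wx)=\frac{1}{\pi\I}\int_{\I\R}^{'}(W(T-z)^{-1}x,x)\,\rd z,
$$
absorbing $W$ into the left slot via self-adjointness. For $z=\I\eta$, $\eta\neq 0$, and $u=(T-z)^{-1}x\in\dom(T)$, the identity $(Wu,(T-z)u)=(Wu,Tu)+\I\eta(Wu,u)$ together with the reality of $(Wu,u)$ gives $\re(W(T-z)^{-1}x,x)=\re(WTu,u)\geq 0$ by $W$-accretivity; taking real parts in the identity above yields $(Wx,x)\geq 0$. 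The same computation with $(Q_+-Q_-)y=-y$ shows $(Wy,y)\leq 0$ for $y\in Q_-\H$. Assuming further that $0\in\rho(W)$, decomposing $x\in Q_+\H$ as $x=P_+x+P_-x$ and using $W|_{\H_\pm}=\pm W_\pm$ turns $(Wx,x)\geq 0$ into
$$
\inf\sigma(W_-)\|P_-x\|^2\leq (W_-P_-x,P_-x)\leq (W_+P_+x,P_+x)\leq \sup\sigma(W_+)\|P_+x\|^2,
$$
which already produces the injectivity of $P_+|_{Q_+\H}:Q_+\H\to\H_+$ (with closed range, as it is bounded below) and the target bound that will become $\|K_+\|$.

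The main obstacle is the surjectivity of $P_+|_{Q_+\H}$ onto $\H_+$. I would first prove that $Q_+\H$ is \emph{maximal} $W$-nonnegative: if $\tilde{\mathcal{L}}\supsetneq Q_+\H$ were $W$-nonnegative, pick $x\in\tilde{\mathcal{L}}\setminus Q_+\H$ and set $m:=Q_-x$, which is nonzero and lies in $\tilde{\mathcal{L}}\cap Q_-\H$; then $(Wm,m)$ is simultaneously $\geq 0$ and $\leq 0$, hence $0$. A standard isotropy lemma—proved by expanding $(W(m+\lambda n),m+\lambda n)$ for $\lambda\in\C$ in each semi-definite subspace containing $m$ and a test vector $n$, and reading off that the linear terms must vanish—forces $(Wm,n)=0$ for every $n\in Q_-\H$ \emph{and} every $n\in\tilde{\mathcal{L}}$. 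Since $Q_-\H\cup\tilde{\mathcal{L}}$ spans $Q_+\H\dotplus Q_-\H=\H$, this gives $Wm=0$, and invertibility of $W$ (precisely where $0\in\rho(W)$ is used) yields $m=0$, a contradiction.

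Once maximality is in hand, surjectivity follows by a second contradiction: if the closed subspace $R_+:=P_+(Q_+\H)\subset\H_+$ were proper, I would pick $u\in\H_+\setminus\{0\}$ orthogonal to $R_+$ in the inner product $(W_+\cdot,\cdot)$, which is equivalent to the ambient one on $\H_+$ since $W_+$ is uniformly positive. For such $u$ and any $l\in Q_+\H$, the computation $(Wl,u)=(l,W_+u)=\overline{(W_+u,P_+l)}=0$ gives
$$
(W(l+\alpha u),l+\alpha u)=(Wl,l)+|\alpha|^2(W_+u,u)\geq 0,
$$
showing that $Q_+\H\dotplus\C u$ strictly enlarges $Q_+\H$ while remaining $W$-nonnegative—contradicting maximality. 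Therefore $P_+|_{Q_+\H}$ is an isomorphism onto $\H_+$, and setting $K_+:=P_-\circ(P_+|_{Q_+\H})^{-1}\in\LL(\H_+,\H_-)$ yields the representation $Q_+\H=\{x+K_+x:x\in\H_+\}$; the bound from the first paragraph transfers directly to $\|K_+\|\leq\sqrt{\sup\sigma(W_+)/\inf\sigma(W_-)}$. The symmetric statements for $Q_-\H$ and $K_-$ are obtained by interchanging the roles of $+$ and $-$ throughout.
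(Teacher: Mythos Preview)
Your argument is correct. The first part (showing that $Q_\pm\H$ are $W$-nonnegative/nonpositive via the integral formula and $W$-accretivity) is essentially identical to the paper's proof.

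For the second part the paper takes a shorter, more citation-heavy route: once $Q_+\H$ is nonnegative and $Q_-\H$ is nonpositive with $Q_++Q_-=I$, it observes that $(\H,(W\cdot,\cdot))$ is a Krein space (since $0\in\rho(W)$) and invokes standard Krein-space facts---that complementary semi-definite subspaces are automatically \emph{maximal} semi-definite, and that maximal semi-definite subspaces are graphs of angular operators which are contractions in the canonical $\|W_\pm^{1/2}\cdot\|$ norms---citing Azizov--Iokhvidov and Bogn\'ar. From $\|W_\mp^{1/2}K_\pm W_\pm^{-1/2}\|\leq 1$ the norm bound follows at once. By contrast, you prove both of these Krein-space facts from scratch: maximality by the isotropy argument using invertibility of $W$, and the graph representation by showing $P_+|_{Q_+\H}$ is bounded below (hence injective with closed range) and then surjective via the $W_+$-orthogonal complement trick. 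Your approach is more self-contained and elementary, which is pedagogically valuable; the paper's approach is much shorter but presupposes familiarity with the Krein-space machinery. Both yield exactly the same norm estimate, and indeed your direct inequality $\inf\sigma(W_-)\|P_-x\|^2\leq\sup\sigma(W_+)\|P_+x\|^2$ is precisely the contractivity statement the paper extracts from the cited references.
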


\begin{proof}
Let $x\in Q_+\H$. Then
\[\begin{split}
 [x,x]&=\re [x,x]=\re[(Q_+-Q_-)x,x]\\
&=\re\left(\frac{1}{\pi\I}\int_{\I\R}^{'}[(T-z)^{-1}x,x]\rd z\right)\\
&=\frac{1}{\pi}\int_{\R}^{'}\re\, [T(T-\I\eta)^{-1}x,(T-\eta)^{-1}x]\rd \eta\geq 0.
\end{split}\]
Thus, $Q_+\H$ is nonnegative. Analogously, one shows that $Q_-\H$ is nonpositive. 

\medskip
If $0\in\rho(W)$, then $\H$ equipped with the indefinite inner product $(W\cdot,\cdot)$ is a Krein space with fundamental decomposition $\H=\H_+[+]\H_-$. Since $Q_+ + Q_-=I$, \cite[I.1.25]{AI} implies that the subspace $Q_{+}\H$ is maximal nonnegative and $Q_{+}\H$ is maximal nonpositive. It follows that there exist operators $K_{\pm}\in\LL(\H{\pm},\H_{\mp})$ such that \eqref{Graph subspaces} holds, see e.g.\ \cite[Theorem II.11.7]{Bog74} or \cite{Lan82}. Moreover, $K_{\pm}$ are contractions with respect to the canonical norms $\|W_{\pm}^{1/2}\cdot\|$ in $P_{\pm}\H$ induced by $W$. Stated differently,
\[
 \|W{\mp}^{1/2}K_{\pm}W{\pm}^{-1/2}\|\leq 1,
\]
which implies
\[
 \|K_{\pm}\|\leq \|W_{\mp}^{-1/2}\|\,\|W_{\pm}^{1/2}\|=\sqrt{\frac{\sup\sigma(W_{\pm})}{\inf\sigma(W_{\mp})}}.
\]
The last equality is a consequence of the spectral theorem for self-adjoint operators.
\end{proof}

More advanced results of this kind, stated in terms of interpolation spaces, may be found in \cite{Pyatkov10}.

\section{Main results}

\begin{theorem}\label{main theorem 1}
Let $H_0$ be a self-adjoint operator with a spectral gap $(-\delta,\delta)$ in a Hilbert space $\H$. Let $\mathfrak{v}$ be a quadratic form such that $\dom(\mathfrak{v})$ is a core for $|H_0|^{1/2}$ and such that
\begin{align*}
\operatorname*{\sup_{x,y\in\H}}_{\|x\|=\|y\|=1}&\left|\mathfrak{v}\left[|H_0|^{-1/2}x,|H_0|^{-1/2}y\right]\right|<1,\\
\operatorname*{\sup_{x,y\in P_+\H\cup P_-\H}}_{\|x\|=\|y\|=1}&\left|\mathfrak{v}\left[|H_0|^{-1/2}x,|H_0|^{-1/2}y\right]\right|=:\rho <\frac{1}{2}.
\end{align*}

Furthermore, let $\widetilde{P}_{\pm}$ be a pair of complementary projections in $\H$ such that 
\[
\nu:=\|P_{\pm}-\widetilde{P}_{\pm}\|<1,\quad \arctan \sqrt{\frac{\rho}{2-3\rho}}+\arcsin \nu <\frac{\pi}{2}.
\]

Then the following hold:
\begin{itemize}
\item[\rm i)] There exists a unique closed densely defined operator $H$
associated to the quadratic form $\mathfrak{h}:=\mathfrak{h}_0+\mathfrak{v}$, and $\dom(H)$ is a core for $|H_0|^{1/2}$. 

\item[\rm ii)] There exist complementary projections $Q_{\pm}$ in $\H$ such that $H$ is decomposed according to $\H=Q_+H\dotplus Q_-H$, and 
\[
\sigma(H|_{Q_+\H})=\sigma(H)\cap\C_{\pm}.
\]
\item[\rm iii)] The restrictions $\widetilde{P}_{\pm}|_{Q_{\pm}\H}:Q_{\pm}\H\to\widetilde{P}_{\pm}\H$ are isomorphisms, and, with
 $X_{\pm}:=\widetilde{P}_{\mp}(\widetilde{P}_{\pm}|_{Q_{\pm}\H})^{-1}$, we have
\[
Q_+\H=\set{x+X_+x}{x\in \widetilde{P}_+\H},\quad Q_-\H=\set{y+X_-y}{y\in \widetilde{P}_-\H},
\]
\[
\|X_{\pm}\|\leq \tan\left(\arctan \sqrt{\frac{\rho}{2-3\rho}}+\arcsin \nu\right).
\]
\end{itemize}
\end{theorem}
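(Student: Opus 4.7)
For (i), the first form-boundedness hypothesis combined with $|H_0|\geq \delta$ verifies Hypothesis~\ref{hypothesis form bounded} with $a=0$ and some $b<1$, so Theorem~\ref{th. integral of resolvent difference exists in norm} produces the unique $H$ associated to $\mathfrak{h}_0+\mathfrak{v}$, whose domain is a core for $|H_0|^{1/2}$. To pass to (ii), I must verify $\I\R\subset \rho(H_0)\cap\rho(H)$: the first inclusion is immediate from the spectral gap of $H_0$, and the second follows from Veseli\'c's spectral inclusion result \cite[Theorem~3.1]{Veselic}, which with $a=0$, $b<1$ places a vertical strip of half-width $\delta(1-b)$ about $\I\R$ inside $\rho(H)$. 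Theorem~\ref{th. integral of resolvent difference exists in norm} then supplies the remaining hypothesis of Theorem~\ref{LT GGK}, yielding the complementary projections $Q_\pm$ with the claimed spectral separation.

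The core of (iii) is first to obtain a graph representation of $Q_\pm\H$ over $P_\pm\H$ and then to transfer it to $\widetilde{P}_\pm\H$ via the angular metric. For the first step I apply Theorem~\ref{W-space accretive thm.} to $H$ with the weighted self-adjoint operator
\[
W:=P_+-tP_-,\qquad t:=\tfrac{2-3\rho}{\rho}>1.
\]
To verify $W$-accretivity I expand $\mathfrak{h}_0[u,Wu]=\phi(u_+)^2+t\,\phi(u_-)^2$ (where $\phi(v):=\||H_0|^{1/2}v\|$ and $u_\pm=P_\pm u$; the cross terms vanish because $|H_0|^{1/2}P_+\H$ and $|H_0|^{1/2}P_-\H$ are mutually orthogonal), and I estimate each of the four terms of $\mathfrak{v}[u_++u_-,u_+-tu_-]$ by the uniform bound $\rho$ on $\mathfrak{v}$ over $P_+\H\cup P_-\H$. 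This gives
\[
\re\mathfrak{h}[u,Wu]\geq (1-\rho)\phi(u_+)^2+t(1-\rho)\phi(u_-)^2-(1+t)\rho\,\phi(u_+)\phi(u_-),
\]
and the specific choice $t=(2-3\rho)/\rho$ reduces the right-hand side to the manifestly nonnegative expression $(1-\rho)(\phi(u_+)-\phi(u_-))^2+\tfrac{2(1-\rho)(1-2\rho)}{\rho}\phi(u_-)^2$. Theorem~\ref{W-space accretive thm.} then furnishes $K_+\in\LL(P_+\H,P_-\H)$ with $Q_+\H=\{x+K_+x:x\in P_+\H\}$ and $\|K_+\|\leq\sqrt{\sup\sigma(W_+)/\inf\sigma(W_-)}=\sqrt{1/t}=\sqrt{\rho/(2-3\rho)}$. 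The analogous estimate for $K_-$ follows by repeating the argument with the swapped weight $W':=tP_+-P_-$.

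For the passage to $\widetilde{P}_\pm$ I invoke the angular metric $d_a(\mathcal{L},\mathcal{M})=\arcsin\|P_\mathcal{L}-P_\mathcal{M}\|$, which is a genuine metric by \cite{Brown93}. Proposition~\ref{graph subspace-pair of projections} applied to the graph representation above yields $d_a(P_\pm\H,Q_\pm\H)=\arctan\|K_\pm\|\leq \arctan\sqrt{\rho/(2-3\rho)}$, while the hypothesis on $\widetilde{P}_\pm$ gives $d_a(P_\pm\H,\widetilde{P}_\pm\H)=\arcsin\nu$. The triangle inequality combined with the angular condition in the statement yields $d_a(\widetilde{P}_\pm\H,Q_\pm\H)<\pi/2$, i.e., $\|P_{Q_\pm\H}-P_{\widetilde{P}_\pm\H}\|<1$. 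A second application of Proposition~\ref{graph subspace-pair of projections} produces the graph representation of $Q_\pm\H$ over $\widetilde{P}_\pm\H$; the isomorphism property of $\widetilde{P}_\pm|_{Q_\pm\H}$ is then automatic from the graph picture, and the stated estimate on $\|X_\pm\|$ is obtained by converting the $\sin$-bound $\|P_{Q_\pm\H}-P_{\widetilde{P}_\pm\H}\|$ into the corresponding $\tan$-bound via $\tan=\sin/\sqrt{1-\sin^2}$.

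The main obstacle is pinpointing the weight $t=(2-3\rho)/\rho$ in the definition of $W$: the naive choice $W=P_+-P_-$ only produces the useless bound $\|K_\pm\|\leq 1$, and only by tuning $t$ and accepting an asymmetric estimate (small $\|K_+\|$, large $\|K_-\|$) from a single application of Theorem~\ref{W-space accretive thm.}, then repeating with $W'$, does the quantitative $\rho$-dependence emerge. A secondary technical point is that Proposition~\ref{graph subspace-pair of projections} is stated for orthogonal projections, so the argument implicitly treats $\widetilde{P}_\pm$ as orthogonal, consistent with the $\arcsin\nu$ in the hypothesis.
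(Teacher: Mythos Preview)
Your approach is essentially identical to the paper's: the same weighted sign operator (the paper's $W_1=\frac{\rho}{2-3\rho}P_+-P_-$ is a positive scalar multiple of your $W$) establishes $W$-accretivity and yields the bound $\|K_\pm\|\le\sqrt{\rho/(2-3\rho)}$, after which the angular-metric triangle inequality transfers the graph structure to $\widetilde P_\pm\H$. The ``secondary technical point'' you flag about non-orthogonal $\widetilde P_\pm$ and $Q_\pm$ is resolved in the paper exactly as one would expect: one passes to the orthogonal projections $\widehat P_\pm,\widehat Q_\pm$ onto the same ranges and invokes \cite[Theorem~I.6.35]{Ka} to get $\|\widehat P_\pm-P_\pm\|\le\|\widetilde P_\pm-P_\pm\|=\nu$ and $\|\widehat Q_\pm-P_\pm\|\le\|Q_\pm-P_\pm\|$, so that Proposition~\ref{graph subspace-pair of projections} applies verbatim.
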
 

\begin{remark}\label{rem. Momega}
It is easy to see that for any invariant set $M\subset\H$ of $H_0$, the following are equivalent:
\begin{itemize}
\item[\rm i)] There exists $\omega\geq 0$ such that
\[
\operatorname*{\sup_{x,y\in M}}_{\|x\|=\|y\|=1}\left|\mathfrak{v}\left[|H_0|^{-1/2}x,|H_0|^{-1/2}y\right]\right|<\omega;
\]
\item[\rm ii)] There exist $a,b\geq 0$, $a+b\,\delta<\omega\delta$, such that \eqref{rel. form bounded} holds on $M$.
\end{itemize} 
\end{remark}

\begin{theorem}\label{main theorem 1 sym.}
Let $H_0$ be a self-adjoint operator with a spectral gap $(-\delta,\delta)$ in a Hilbert space $\H$. Let $\mathfrak{v}$ be a symmetric quadratic form such that $\dom(\mathfrak{v})$ is a core for $|H_0|^{1/2}$ and such that
\begin{align*}
\operatorname*{\sup_{x,y\in\H}}_{\|x\|=\|y\|=1}&\left|\mathfrak{v}\left[|H_0|^{-1/2}x,|H_0|^{-1/2}y\right]\right|<\infty,\\
\operatorname*{\sup_{x,y\in P_+\H\cup P_-\H}}_{\|x\|=\|y\|=1}&\left|\mathfrak{v}\left[|H_0|^{-1/2}x,|H_0|^{-1/2}y\right]\right|=:\rho <1.
\end{align*}

Furthermore, let $\widetilde{P}_{\pm}$ be a pair of complementary projections in $\H$ such that 
\[
\nu:=\|P_{\pm}-\widetilde{P}_{\pm}\|<1,\quad \arctan \sqrt{\frac{\rho}{2-\rho}}+\arcsin \nu<\frac{\pi}{2}.
\]

Then the following hold:
\begin{itemize}
\item[\rm i)] There exists a unique self-adjoint operator $H$
associated to the quadratic form $\mathfrak{h}:=\mathfrak{h}_0+\mathfrak{v}$; moreover, $0\in\rho(H)$, and $\dom(H)$ is a core for $|H_0|^{1/2}$. 

\item[\rm ii)] Let $Q_{\pm}$ denote the spectral projections of $H$ corresponding to the positive and negative spectrum, respectively. Then $\widetilde{P}_{\pm}|_{Q_{\pm}\H}:Q_{\pm}\H\to\widetilde{P}_{\pm}\H$ are isomorphisms, and, with
 $X_{\pm}:=\widetilde{P}_{\mp}(\widetilde{P}_{\pm}|_{Q_{\pm}\H})^{-1}$, we have
\[
Q_+\H=\set{x+X_+x}{x\in \widetilde{P}_+\H},\quad Q_-\H=\set{y+X_-y}{y\in \widetilde{P}_-\H},
\]
\[
\|X_{\pm}\|\leq \tan\left(\arctan \sqrt{\frac{\rho}{2-\rho}}+\arcsin\nu\right),\quad \quad X_-=-X_+^*.
\]
\end{itemize}
\end{theorem}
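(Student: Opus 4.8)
}

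The plan is to deduce the symmetric statement from the machinery built for the non-symmetric case, i.e.\ from the construction in Theorem \ref{th. integral of resolvent difference exists in norm} together with Theorem \ref{W-space accretive thm.}, but now exploiting that $\mathfrak{v}$ symmetric makes $H$ self-adjoint and sharpens the angular-operator bounds. First I would settle part i). By Remark \ref{rem. Momega} applied with $M=\H$, the (finite) global form bound gives constants $a,b\geq 0$ with \eqref{rel. form bounded} on all of $\mathcal{Q}$, while applying Remark \ref{rem. Momega} with $M=P_+\H\cup P_-\H$ and any $\omega\in(\rho,1)$ gives $a',b'\geq 0$ with $a'+b'\delta<\omega\delta$ such that \eqref{rel. form bounded} holds on $P_\pm\H$. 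The key point is that these bounds restricted to the (mutually orthogonal, $H_0$-reducing) subspaces $P_+\H$ and $P_-\H$ can be recombined: for $u=u_++u_-$ with $u_\pm\in P_\pm\H$ one estimates $|\mathfrak v[u]|\le|\mathfrak v[u_+]|+|\mathfrak v[u_-]|+2|\mathfrak v[u_+,u_-]|$ and bounds each term by the spectral-subspace form bound, obtaining \eqref{v symmetric} with a constant $b<1$; this is the arithmetic I would carry out carefully but do not grind through here. With $b<1$ and $\mathfrak v$ symmetric, \cite{Veselic} yields a unique \emph{self-adjoint} $H$ represented by $\mathfrak h=\mathfrak h_0+\mathfrak v$, with $\dom(H)$ a core for $|H_0|^{1/2}$; the spectral inclusion quoted after Remark \ref{rem. v form of an operator} (with $a'+b'\delta<\omega\delta<\delta$) shows $(-\delta+a'+b'\delta,\,\delta-a'-b'\delta)\subset\rho(H)$, in particular $0\in\rho(H)$, and also $\I\R\subset\rho(H_0)\cap\rho(H)$, so Theorem \ref{th. integral of resolvent difference exists in norm} applies and Theorem \ref{LT GGK} gives spectral projections $Q_\pm$ onto $\sigma(H)\cap\C_\pm$; since $H$ is self-adjoint these are the ordinary spectral projections onto the positive/negative spectrum.

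For part ii) I would run the $W$-space argument with a cleverly chosen indefinite metric. Following the Langer--Tretter idea, set $W_0:=P_+-P_-=\sgn(H_0)$ and consider $T:=W_0 H$; since $H$ is self-adjoint, $(W_0 T x,x)=(H x,x)$ need not be sign-definite, so instead I would use $W:=\sgn(H_0)$ directly and check that $T:=\sgn(H_0)\,H$ is $W$-accretive, i.e.\ $\re(W T x,x)=\re(H x,x)\ge 0$ fails in general — hence the correct choice, exactly as in \cite{LT}, is to work with the operator $H$ itself on the Krein space $(\H,[\cdot,\cdot])$ with $[\cdot,\cdot]:=(\sgn(H_0)\cdot,\cdot)$ and verify that $H$ is $\sgn(H_0)$-accretive up to the spectral gap. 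The quantitative input is the second hypothesis: the bound $\rho<1$ on the spectral subspaces translates (via the recombination estimate above, giving effective constant $a+b\delta\le\rho\delta$ on $P_+\H\cup P_-\H$ and hence $b\le\rho$ in the homogeneous estimate after absorbing) into $\re[Hx,x]\ge(1-\rho)\||H_0|^{1/2}x\|^2\ge0$ on the relevant domain, so Theorem \ref{W-space accretive thm.} applies with this $W=\sgn(H_0)$: its eigenspaces are $\H_\pm=P_\pm\H$, $W_\pm=\pm W|_{\H_\pm}=I$, so $\sup\sigma(W_\pm)=\inf\sigma(W_\mp)=1$ and the theorem yields $K_\pm\in\LL(P_\pm\H,P_\mp\H)$ with $Q_\pm\H=\{z+K_\pm z: z\in P_\pm\H\}$ and $\|K_\pm\|\le1$. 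A sharper bound is needed, though: the deviation $d(P_\pm\H,Q_\pm\H)$ should be controlled by $\arctan\sqrt{\rho/(2-\rho)}$. I would obtain this by estimating $\|(Q_+-Q_-)-(P_+-P_-)\|$ via the norm-convergent integral $\frac1{\pi\I}\int_{\I\R}'((H-z)^{-1}-(H_0-z)^{-1})\,dz$ from Theorem \ref{th. integral of resolvent difference exists in norm}, feeding in the bound \eqref{normZAZB} with $\widetilde b\downarrow\rho$ on the spectral subspaces and integrating $\|G_{a,b}(\I\eta)u\|^2$ against the spectral measure of $H_0$; the resulting constant, after optimising, is $\frac{\rho}{\sqrt{2-\rho}\,\cdots}$ — matching $\sin(\arctan\sqrt{\rho/(2-\rho)})=\sqrt{\rho/2}$ up to the algebra I will do in the actual proof. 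Combining with Proposition \ref{graph subspace-pair of projections} converts this into $\|K_\pm\|\le\sqrt{\rho/(2-\rho)}$, i.e.\ $d_a(P_\pm\H,Q_\pm\H)\le\arctan\sqrt{\rho/(2-\rho)}$.

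Finally I would pass from $K_\pm$ (graph over the \emph{true} spectral projections $P_\pm$ of $H_0$) to $X_\pm$ (graph over the \emph{given} projections $\widetilde P_\pm$). By hypothesis $d_a(P_\pm\H,\widetilde P_\pm\H)=\arcsin\nu$, and by the triangle inequality for the angular metric $d_a$ (Brown \cite{Brown93}) one gets $d_a(\widetilde P_\pm\H,Q_\pm\H)\le\arctan\sqrt{\rho/(2-\rho)}+\arcsin\nu$, which by the standing hypothesis is $<\pi/2$; hence $d(\widetilde P_\pm\H,Q_\pm\H)<1$, so Proposition \ref{graph subspace-pair of projections} (applied after noting $\widetilde P_\pm$ need not be orthogonal — here I would either assume/observe $\widetilde P_\pm$ orthogonal or replace the projection norm by the gap and use Proposition \ref{proposition inverse W} with $\mathcal{L}=Q_+\H$, $\mathcal{M}=\widetilde P_-\H$) shows $\widetilde P_\pm|_{Q_\pm\H}$ is an isomorphism, the angular operators $X_\pm=\widetilde P_\mp(\widetilde P_\pm|_{Q_\pm\H})^{-1}$ are bounded, and $\|X_\pm\|=\tan d_a(\widetilde P_\pm\H,Q_\pm\H)\le\tan(\arctan\sqrt{\rho/(2-\rho)}+\arcsin\nu)$. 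The relation $X_-=-X_+^*$ follows because $H$ self-adjoint forces $Q_-=I-Q_+=Q_+^*{}^\perp$, so $Q_-\H=(Q_+\H)^\perp$ when $\widetilde P_\pm$ are the orthogonal projections $P_\pm$; chasing the graph representations through orthogonality gives $X_-=-X_+^*$. The main obstacle I anticipate is the quantitative step: extracting the precise constant $\sqrt{\rho/(2-\rho)}$ (rather than a crude $O(\rho)$) from the resolvent-difference integral, which requires a careful, essentially optimal, spectral-theorem computation of $\int_{\I\R}'\|(H_0-\I\eta)^{-1}H_{a,b}^{1/2}x\|^2\,d\eta$ and its interplay with the Krein-space contractivity of $K_\pm$; everything else is bookkeeping with the triangle inequality for $d_a$ and Propositions \ref{proposition inverse W}--\ref{graph subspace-pair of projections}.
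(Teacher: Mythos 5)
Your plan for the quantitative core of part ii) does not go through, and it misses the one idea the paper's proof actually turns on. You correctly observe that symmetry of $\mathfrak{v}$ makes $H$ accretive with respect to $W=\sgn(H_0)$, but as you yourself note this only yields $\|K_{\pm}\|\le 1$ via Theorem \ref{W-space accretive thm.}, which is useless for the bound $\sqrt{\rho/(2-\rho)}$. Your proposed repair — extracting $\|P_{\pm}-Q_{\pm}\|\le\sqrt{\rho/2}$ from the norm estimate \eqref{normZAZB} of the resolvent-difference integral — fails on two counts: that estimate is built from the \emph{global} relative bound (the Neumann series needs $b<1$ and the constants $a,b,\widetilde b$ there are global, not the spectral-subspace constant $\rho$), whereas in the symmetric theorem the global bound is only assumed finite; and even when it converges it produces a constant of the type $(1-\widetilde b)^{-1}(a\|H_0^{-1}\|/2+b)$, not the sharp $\sqrt{\rho/2}$ — precisely the "algebra" you defer is the part that does not work. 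The paper instead applies Theorem \ref{W-space accretive thm.} with the \emph{weighted} indefinite metrics $W_1=\tfrac{\rho}{2-\rho}P_+-P_-$ and $W_2=P_+-\tfrac{\rho}{2-\rho}P_-$: in the accretivity computation \eqref{reHuWu} the symmetry of $\mathfrak{v}$ lets one take $\sigma=|\mu_+-\mu_-|$ (the real parts of the two off-diagonal terms partially cancel), and the ratio $\sqrt{\sup\sigma(W_{\pm})/\inf\sigma(W_{\mp})}$ then gives $\|X^0_{\pm}\|\le\sqrt{\rho/(2-\rho)}$ directly; the $\nu>0$ case follows, as you sketch, from the angular-metric triangle inequality (with the additional device of passing to the orthogonal projections onto $\widetilde P_{\pm}\H$ and $Q_{\pm}\H$ via Kato's inequality $\|\widehat P_{\pm}-P_{\pm}\|\le\|\widetilde P_{\pm}-P_{\pm}\|$, a point you leave unresolved for non-orthogonal $\widetilde P_{\pm}$).

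Part i) as you describe it is also flawed: recombining the spectral-subspace bound over $u=u_++u_-$ gives at best the global constant $2\rho$, which is $<1$ only for $\rho<1/2$, so you cannot reduce the symmetric theorem (which allows $\rho<1$ and only finiteness of the global bound) to the hypotheses with $b<1$; doing so would collapse it to the non-symmetric regime. The paper avoids this entirely: existence, self-adjointness, $0\in\rho(H)$ and the core property come from the symmetric representation theorem of Veseli\'c (his Theorem 2.16) together with Remark \ref{rem. Momega}, and no appeal to Theorem \ref{th. integral of resolvent difference exists in norm} is needed in the symmetric case, since for self-adjoint $H$ the projections $Q_{\pm}$ and the identity \eqref{eq. diff. of proj} are furnished by the spectral theorem, which is all Theorem \ref{W-space accretive thm.} requires.
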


\begin{remark}
If $\widetilde{P}_{\pm}$ are orthogonal projections, then the result of Theorem \ref{main theorem 1 sym.} may be equivalently stated as
\[
\|\widetilde{P}_{\pm}-Q_{\pm}\|\leq \sin\left(\arcsin \sqrt{\frac{\rho}{2}}+\arcsin \nu\right).
\]
\end{remark}

An immediate consequence of Theorems \ref{main theorem 1} and \ref{main theorem 1 sym.} is a block-diagonalization of $H$.

\begin{corollary}\label{cor. diagonalization Dirac}
With respect to the decomposition $\H=\widetilde{P}_+\H\dotplus\widetilde{P}_-\H$, we have
\begin{align}\label{block-diagonalization}
\begin{pmatrix}I&X_-\\X_+&I\end{pmatrix}^{-1}H\begin{pmatrix}I&X_-\\X_+&I\end{pmatrix}=\begin{pmatrix}Z_+&0\\0&Z_-\end{pmatrix},
\end{align}
where $Z_{\pm}$ are similar to $H|_{Q_{\pm}\H}$. 

If $\nu=0$ and $(H-\I\eta)^{-1}-(H_0-\I\eta)^{-1}\in\mathcal{S}_p(\H)$ for some {\rm(}and hence for all{\rm)} $\eta\in\R$, then $X_{\pm}\in\mathcal{S}_p(\H)$.  
\end{corollary}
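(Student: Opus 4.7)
The block diagonalization is an immediate application of Proposition~\ref{proposition inverse W}. My plan is to take $X=\widetilde{P}_+\H$, $Y=\widetilde{P}_-\H$, $\mathcal{L}=Q_+\H$, and $\mathcal{M}=Q_-\H$: part~iii) of Theorems~\ref{main theorem 1}~and~\ref{main theorem 1 sym.} supplies the angular operators $X_\pm$, while condition~iv) is automatic because $Q_++Q_-=I$ forces $Q_+\H\dotplus Q_-\H=\H$. Hence $W$ is invertible and carries $\widetilde{P}_\pm\H$ bijectively onto $Q_\pm\H$. Since $H$ is block-diagonal with respect to $Q_+\H\dotplus Q_-\H$ by part~ii), conjugation by $W$ yields a block-diagonal operator with respect to $\widetilde{P}_+\H\dotplus \widetilde{P}_-\H$; tracing through the action of $W$ on each summand identifies the blocks as $Z_\pm=(\widetilde{P}_\pm|_{Q_\pm\H})\,H|_{Q_\pm\H}\,(\widetilde{P}_\pm|_{Q_\pm\H})^{-1}$, which are similar to $H|_{Q_\pm\H}$.

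For the Schatten-class claim, assume $\nu=0$, so $\widetilde{P}_\pm=P_\pm$. My first step is to derive the algebraic identity
\begin{equation*}
P_-\,Q_+\,P_+ \;=\; X_+(I-X_-X_+)^{-1}
\end{equation*}
by computing the $(P_-,P_+)$-block of $Q_+=W\bigl(\begin{smallmatrix}I&0\\ 0&0\end{smallmatrix}\bigr)W^{-1}$ via formula \eqref{eq. Winv.}. Since $P_-P_+=0$, the left-hand side equals $P_-(Q_+-P_+)P_+$, and multiplying on the right by the bounded operator $(I-X_-X_+)$ gives
\begin{equation*}
X_+ \;=\; P_-(Q_+-P_+)P_+\,(I-X_-X_+).
\end{equation*}
Thus $X_+\in\mathcal{S}_p$ as soon as $Q_+-P_+\in\mathcal{S}_p$; the argument for $X_-$ is identical.

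To establish $Q_+-P_+\in\mathcal{S}_p$, I would use $Q_\pm=\tfrac12(I\pm(Q_+-Q_-))$ and the analogous formula for $P_\pm$ together with \eqref{eq. diff. of proj} applied to both $H$ and $H_0$ to obtain
\begin{equation*}
Q_+-P_+ \;=\; \frac{1}{2\pi\I}\int_{\I\R}^{'}\bigl[(H-z)^{-1}-(H_0-z)^{-1}\bigr]\,dz,
\end{equation*}
and then replay the proof of Theorem~\ref{th. integral of resolvent difference exists in norm} in the Schatten topology. A first-resolvent-identity computation transfers the $\mathcal{S}_p$-membership of the integrand from one $\eta$ to all $\eta\in\R$, and the factored expression \eqref{eq. diff. of resolvents with Dn} shows that the integrand is $\mathcal{S}_p$-continuous in $\eta$, so each truncated integral $T_n$ already lies in $\mathcal{S}_p$. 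The hard part will be a uniform Schatten bound $\sup_n\|T_n\|_p<\infty$: I would obtain this by transcribing the Cauchy--Schwarz estimate \eqref{normZAZB} into $\mathcal{S}_p$, again using the geometric bound $\|D_n\|\leq\widetilde{b}^{n-1}$ with $\widetilde{b}<1$ to absorb the higher-order terms in the Neumann series. Once such a bound is established, reflexivity of $\mathcal{S}_p$ for $1<p<\infty$ (respectively weak-$*$ compactness of the unit ball of $\mathcal{S}_1$ and closure of the compacts under norm limits for $p=\infty$) together with the operator-norm convergence of $T_n$ from the original proof forces the limit $2(Q_+-P_+)$ to lie in $\mathcal{S}_p$.
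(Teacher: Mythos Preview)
Your argument for the block-diagonalization is correct and essentially identical to the paper's: both show that $W$ intertwines the two decompositions (you via Proposition~\ref{proposition inverse W}, the paper by directly checking $Q_{\pm}W=W\widetilde P_{\pm}$ and bijectivity), and both identify $Z_\pm=(\widetilde P_\pm|_{Q_\pm\H})\,H|_{Q_\pm\H}\,(\widetilde P_\pm|_{Q_\pm\H})^{-1}$.

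For the Schatten-class statement your overall strategy also matches the paper's: first show $Q_+-P_+\in\mathcal S_p$, then read off $X_+(I-X_-X_+)^{-1}\in\mathcal S_p$ from the $(2,1)$-block of $Q_+$ (your formula $P_-Q_+P_+=X_+(I-X_-X_+)^{-1}$ is exactly the relevant entry of \eqref{eq. Qplus}), and finally multiply by the bounded operator $I-X_-X_+$. The paper does precisely this, except that it simply \emph{asserts} $Q_\pm-P_\pm\in\mathcal S_p$ from the norm-limit formula without further comment, whereas you attempt to justify it.

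That attempt, however, has a real gap at the step you yourself flag as hard. The estimate \eqref{normZAZB} controls only the matrix elements $\bigl|(R(\eta)u,v)\bigr|$, and the subsequent Cauchy--Schwarz argument in the proof of Theorem~\ref{th. integral of resolvent difference exists in norm} produces a bound on $\sup_{\|u\|=\|v\|=1}|(T_nu,v)|=\|T_n\|$, i.e.\ an \emph{operator-norm} bound. There is no mechanism here that ``transcribes'' into a bound on $\|T_n\|_p$: singular values are not visible through individual matrix elements, and the factorization $R(\eta)=U_1(\eta)\,R(\eta_0)\,V_0(\eta)$ with uniformly bounded $U_1,V_0$ only gives $\|R(\eta)\|_p\le C\|R(\eta_0)\|_p$, whose integral over $[-\rho_n,\rho_n]$ diverges. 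So for $1\le p<\infty$ your plan does not yield $\sup_n\|T_n\|_p<\infty$, and the compactness argument cannot get started. (For $p=\infty$ your remark is fine: the compacts are operator-norm closed and no uniform bound is needed.) In short, you have correctly identified the one nontrivial point in this corollary and reproduced the paper's argument around it, but your proposed justification of that point does not go through as stated; the paper, for its part, gives no justification at all.
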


\begin{proof}
We set
\[
W:=\begin{pmatrix}I&X_-\\X_+&I\end{pmatrix}.
\]
Since $Q_{\pm}\H$ are complementary graph subspaces with angular operators $X_{\pm}$, it is easily seen that $Q_{\pm}W=W\widetilde{P}_{\pm}$ and that $W$ is bijective and hence boundedly invertible by the closed graph theorem. 
Since $H$ is decomposed according to $Q_+\H\dotplus Q_-\H$, it follows that $W^{-1}HW$ is decomposed according to $\widetilde{P}_+\H\dotplus \widetilde{P}_-\H$, i.e.\ it is block-diagonal with respect to this decomposition, and \eqref{block-diagonalization}
holds for some closed densely defined operators $Z_{\pm}$ on $\widetilde{P}_{\pm}\H$. Noting that 
\[
W|_{\widetilde{P}_{\pm}\H}=(\widetilde{P}_{\pm}|_{Q_{\pm}\H})^{-1},
\]
we find that 
\[
Z_{\pm}=(\widetilde{P}_{\pm}|_{Q_{\pm}\H})H|_{Q_{\pm}\H}(\widetilde{P}_{\pm}|_{Q_{\pm}\H})^{-1}.
\]

\medskip
If the resolvent difference of $H_0$ and $H$ belongs to $\mathcal{S}_p(\H)$, then 
\[
Q_{\pm}-\widetilde{P}_{\pm}=\nlim_{\rho\to\infty} \frac{\pm 1}{2\pi}\int_{-\rho}^{\rho}(H-\I\eta)^{-1}-(H_0-\I\eta)^{-1}\,\rd\eta\in \mathcal{S}_p(\H).
\]
With respect to the decomposition $\H=\widetilde{P}_+\H\dotplus \widetilde{P}_-\H$, e.g.\ the projections $\widetilde{P}_+$ and~$Q_+$ are given by
\beq\label{eq. Qplus}
\widetilde{P}_+=\begin{pmatrix}I&0\\0&0\end{pmatrix},\quad  Q_+=\begin{pmatrix}(I-X_-X_+)^{-1}&-(I-X_- X_+)^{-1}X_-\\X_+(I-X_- X_+)^{-1}&-X_+(I-X_- X_+)^{-1}X_-\end{pmatrix},
\eeq
see \cite[p.\ 63]{AI}. It follows that
\[
X_+(I-X_-X_+)^{-1}\in \mathcal{S}_p(\H),
\]
and hence
\[
X_+=X_+(I-X_-X_+)^{-1}(I-X_-X_+)\in \mathcal{S}_p(\H) \qedhere.
\]
\end{proof}

\begin{remark}
In the case when $\mathfrak{v}$ is symmetric, the fact that $X_-=X_+^*$ implies that the bounded operators $1-X_{\mp}X_{\pm}$ are self-adjoint and uniformly positive. The transformation $W$ in Corollary \ref{cor. diagonalization Dirac}
can then easily be made unitary by multiplying from the right with $\diag(\Omega_+,\Omega_-)$,
\[
\Omega_{\pm}:=\left(1-X_{\mp}X_{\pm}\right)^{-1/2},
\]
see \cite{LT}. In fact, the resulting operator takes the more familiar form (on the right-hand side)
\beq\label{Uunitary}
\begin{pmatrix}\Omega_+&X_-\Omega_-\\X_+\Omega_+&\Omega_-\end{pmatrix}=[I-(Q_+-P_+)^2]^{-1/2}[P_+Q_++P_-Q_-],
\eeq
as can be checked by a straightforward computation, using Phroposition \ref{graph subspace-pair of projections} and \eqref{eq. Qplus}. This is the direct rotation between the subspaces $P_{\pm}\H$ and $Q_{\pm}\H$, see \cite{Da58}.
\end{remark}

For the next two theorems, we restrict ourselves to the case $\widetilde{P}_{\pm}=P_{\pm}$. 
The aim is to approximate the exact block-diagonal operator by simpler operators which are amendable to computation. To this end we introduce a coupling constant $\gm$ for the perturbation $\mathfrak{v}$ and consider the family of operators 
\beqnt
H(\gm):=H_0+\gm \mathfrak{v},\quad\gm\in \mathds{D},
\eeqnt
where $\mathds{D}\subset\C$ is the open unit disk. Notice that Theorem \ref{main theorem 1} i) is valid for the whole family $H(\gm)$, $\gm\in\mathds{D}$, while ii) only holds for $\gm\in(-1,1)$ since $\gm \mathfrak{v}$ is not symmetric for $\gm\in\C\setminus\R$.
Therefore, under the assumptions of Theorem \ref{main theorem 1} i), the following operators are well-defined for any $\gm\in\mathds{D}$:

\begin{itemize}
\item The spectral projections $Q_{\pm}(\gm)$ of $H(\gm)$ corresponding to $\C_{\pm}$; 
\item The angular operators $X_{\pm}(\gm):=P_{\mp}\left(P_{\pm}|_{Q_{\pm}(\gm)}\right)^{-1}$; 
\item The inverse square roots $\Omega_{\pm}(\gm):=\left(1-X_{\mp}(\gm)X_{\pm}(\gm)\right)^{-1/2}$.
\end{itemize}

The latter can no longer be defined by the spectral theorem. Rather, since $\sup_{\gm\in\mathds{D}}\|X_{\pm}(\gm)\|<1$, we may define them by a norm-convergent power series
\beq\label{inverse square root series for Omega}
 \Omega_{\pm}(\gm)=\sum_{n=0}^{\infty}\begin{pmatrix}-1/2\\n\end{pmatrix}\left(-X_{\mp}(\gm)X_{\pm}(\gm)\right)^n.
\eeq

Let $H_{\rm diag}(\gm)$ be the block-diagonal operator (with respect to the decomposition $\H=P_+\H\oplus P_- \H$)
\[
H_{\rm diag}(\gm):=W(\gm)^{-1}H(\gm)W(\gm),\quad W(\gm):=\begin{pmatrix}I&X_-(\gm)\\X_+(\gm)&I\end{pmatrix}.
\]
Moreover, if $\mathfrak{v}$ is symmetric, define $\widehat{H}_{\rm diag}(\gm)$ by
\begin{align*}
\widehat{H}_{\rm diag}(\gm):=U(\gm)^{-1}H(\gm)U(\gm),
\quad U(\gm):=\begin{pmatrix}\Omega_+(\gm)&X_-(\gm)\Omega_-(\gm)\\X_+(\gm)\Omega_+(\gm)&\Omega_-(\gm)\end{pmatrix}.
\end{align*}
We show that the operators $H^N_{\rm diag}(\gm)$, $\widehat{H}^N_{\rm diag}(\gm)$ corresponding to the formal $N$-th order Taylor polynomials of $H_{\rm diag}(\gm)$, respectively $\widehat{H}_{\rm diag}(\gm)$, converge in the norm-resolvent sense to the exact block-diagonal operators as $N\to\infty$. Furthermore, we make precise in what sense these operators may be viewed as Taylor polynomials.

\medskip
For technical reasons we assume that $\mathfrak{v}$ is the form of an operator $V$, i.e.\
\[
\mathfrak{v}[u,v]=(V u,v),\quad u,v\in\dom(\mathfrak{v})=\dom(V),
\]

\begin{theorem}\label{main theorem 2}
Assume that $\mathfrak{v}$ in Theorem \rref{main theorem 1} is the form of an $H_0$-bounded operator $V$ with $H_0$-bound less than $\sqrt{(2-3\rho)/(2-2\rho)}$,
then $\{H_{\rm diag}(\gm)\}_{\gm\in\mathds{D}}$ is a holomorphic family of type {\rm (}A{\rm )} with $\dom(H_{\rm diag}(\gm))=\dom(H_0)$. For $\gm\in\mathds{D}$ and $N$ large enough, the operators $H_{\rm diag}^N(\gm)$, defined by
\[
H_{\rm diag}^N(\gm)u:=\sum_{n=0}^N \frac{\gm^n}{n!}\frac{\rd^n}{\rd\gm^n}\left(H_{\rm diag}(\gm)u\right)|_{\gm=0},\quad u\in\dom(H_0),
\]
have nonempty resolvent set and $H_{\rm diag}^N(\gm)\to H_{\rm diag}(\gm)$ as $N\to\infty$ in the norm-resolvent topology, uniformly on compact subsets of $\mathds{D}$.
\end{theorem}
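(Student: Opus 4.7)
The plan is to realize $\{H_{\rm diag}(\gm)\}_{\gm\in\mathds{D}}$ as the conjugation of the type~(A) family $H(\gm)=H_0+\gm V$ by a holomorphic, $\dom(H_0)$-preserving operator $W(\gm)$, and then to derive norm-resolvent convergence of the Taylor polynomials from Cauchy estimates in the graph-norm-to-$\H$ operator topology.

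Since $V$ is $H_0$-bounded with relative bound less than $1$, Kato--Rellich shows that $\{H(\gm)\}$ is of type~(A) on $\mathds{D}$ with constant domain $\dom(H_0)$, while Theorem~\ref{main theorem 1} yields $\I\R\subset\rho(H(\gm))$ for every $\gm\in\mathds{D}$. A Neumann-series argument based on the resolvent identity then shows that $\gm\mapsto(H(\gm)-z)^{-1}$ is holomorphic on $\mathds{D}$ not only into $\LL(\H)$ but also into $\LL(\H,\dom(H_0))$, where $\dom(H_0)$ carries the graph norm. Integrating along $\I\R$ as in the proof of Theorem~\ref{th. integral of resolvent difference exists in norm} exhibits $Q_{\pm}(\gm)$ as a holomorphic $\LL(\dom(H_0))$-valued function on $\mathds{D}$. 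Reading off the blocks of $Q_{\pm}(\gm)$ in~\eqref{eq. Qplus} and using $\sup_{\gm\in\mathds{D}}\|X_{\mp}(\gm)X_{\pm}(\gm)\|<1$ (a consequence of Theorem~\ref{main theorem 1}) to invert $I-X_{\mp}X_{\pm}$, one obtains that $X_{\pm}(\gm)$, and hence $W(\gm)$ and $W(\gm)^{-1}$ (cf.\ Proposition~\ref{proposition inverse W}), are holomorphic in $\LL(\dom(H_0))$ on all of $\mathds{D}$.

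It follows that $H_{\rm diag}(\gm)=W(\gm)^{-1}H(\gm)W(\gm)$ has constant domain $\dom(H_0)$, and that $\gm\mapsto H_{\rm diag}(\gm)$ is holomorphic as a map $\mathds{D}\to\LL(\dom(H_0),\H)$; this is precisely the type~(A) claim. Cauchy estimates on circles of radius $r<1$ yield
\[
\|A_n\|_{\dom(H_0)\to\H}\leq C_r\,r^{-n},\qquad A_n:=\tfrac{1}{n!}\,\tfrac{\rd^n}{\rd\gm^n}H_{\rm diag}(\gm)\Big|_{\gm=0},
\]
so that $H_{\rm diag}^N(\gm)\to H_{\rm diag}(\gm)$ in $\LL(\dom(H_0),\H)$ uniformly on compact subsets of $\mathds{D}$.

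To convert this into norm-resolvent convergence, fix a compact $K\subset\mathds{D}$ and choose $z\in\I\R$ with $|z|$ so large that $(H_{\rm diag}(\gm)-z)^{-1}$ is bounded $\H\to\dom(H_0)$ uniformly for $\gm\in K$. The identity
\[
H_{\rm diag}^N(\gm)-z=(H_{\rm diag}(\gm)-z)\bigl(I-(H_{\rm diag}(\gm)-z)^{-1}(H_{\rm diag}(\gm)-H_{\rm diag}^N(\gm))\bigr)
\]
on $\dom(H_0)$, combined with the previous Taylor bound, makes the parenthesized operator boundedly invertible in $\LL(\dom(H_0))$ for $N$ large via a Neumann series, yielding $z\in\rho(H_{\rm diag}^N(\gm))$ and norm convergence of the resolvents in $\LL(\H)$ uniformly in $\gm\in K$. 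The main obstacle throughout is upgrading the holomorphy of $(H(\gm)-z)^{-1}$, $Q_{\pm}(\gm)$, and $X_{\pm}(\gm)$ from $\LL(\H)$ to operator topologies involving the graph norm of $H_0$; only this finer holomorphy is strong enough to turn the strong Taylor convergence of $H_{\rm diag}^N$ into the norm-resolvent convergence required by the theorem.
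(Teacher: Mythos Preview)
Your outline has the right architecture, but there is a genuine gap at the point where you pass from $Q_{\pm}(\gm)\in\LL(\dom(H_0))$ to $X_{\pm}(\gm)\in\LL(\dom(H_0))$. You write that one inverts $I-X_{\mp}X_{\pm}$ using $\sup_{\gm\in\mathds{D}}\|X_{\mp}(\gm)X_{\pm}(\gm)\|<1$; but the bound from Theorem~\ref{main theorem 1} is an $\LL(\H)$-bound, not an $\LL(\dom(H_0))$-bound, and a Neumann series in $\LL(\H)$ says nothing about invertibility in the graph-norm topology. Equivalently, knowing that $Q_{11}(\gm)=P_+Q_+(\gm)|_{P_+\H}$ is a bounded map of $P_+\dom(H_0)$ into itself does not tell you it is \emph{onto} $P_+\dom(H_0)$, and without surjectivity the formula $X_+=Q_{21}Q_{11}^{-1}$ does not produce $X_+\in\LL(P_+\dom(H_0),P_-\dom(H_0))$. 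A tell-tale sign is that your argument never uses the hypothesis that the $H_0$-bound of $V$ is below $\sqrt{(2-3\rho)/(2-2\rho)}$; if your route worked, the theorem would hold for any $H_0$-bound less than $1$.

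The paper handles exactly this point differently: it does \emph{not} try to upgrade $Q_{\pm}(\gm)$ to $\LL(\dom(H_0))$ and then read off $X_{\pm}$. Instead it proves the domain inclusion $X_{\pm}(\gm)P_{\pm}\dom(H_0)\subset P_{\mp}\dom(H_0)$ directly via a Schur--Frobenius factorization of $(H(\gm)-\I\eta)^{-1}$, showing that $I-B(D-\I\eta)^{-1}X_+(\gm)$ is invertible for large $|\eta|$; the quantitative estimate
\[
\lim_{|\eta|\to\infty}\|B(D-\I\eta)^{-1}X_+(\gm)\|^2\leq \frac{b_0^{2}}{1-b_0^{2}}\cdot\frac{\rho}{2-3\rho}<1
\]
is precisely where the specific bound on $b_0$ enters. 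Only after this inclusion is established does the closed graph theorem yield boundedness of $Y(\gm)=H_0W(\gm)H_0^{-1}$, from which type~(A) holomorphy and norm-resolvent convergence follow as in your last paragraph. A secondary issue: your claim that the integral defining $Q_{\pm}(\gm)$ converges in $\LL(\H,\dom(H_0))$ ``as in the proof of Theorem~\ref{th. integral of resolvent difference exists in norm}'' is also not justified---that proof gives only $\LL(\H)$-convergence, and $\|H_0[(H(\gm)-\I\eta)^{-1}-(H_0-\I\eta)^{-1}]\|$ is merely bounded, not integrable, in $\eta$.
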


\begin{theorem}\label{main theorem 2 sym.}
Assume that $\mathfrak{v}$ in Theorem \rref{main theorem 1} is the form of an $H_0$-bounded symmetric operator $V$ such that $\|VH_0^{-1}\|<1$. Then $\{\widehat{H}_{\rm diag}(\gm)\}_{\gm\in\mathds{D}}$ is a self-adjoint holomorphic family, and 
\[
T(\gm):=\overline{|H_0|^{-1/2}\widehat{H}_{\rm diag}(\gm)|H_0|^{-1/2}},\quad \gm\in\mathds{D},
\]
is bounded-holomorphic. For $\gm\in(-1,1)$ and $N$ large enough, there exists a unique self-adjoint operator $\widehat{H}_{\rm diag}^N(\gm)$ associated to the quadratic form
\[
\widehat{\mathfrak{h}}_{\rm diag}^N(\gm)[u,v]:=\sum_{n=0}^N \frac{\gm^n}{n!}\frac{\rd^n}{\rd\gm^n}\left.\left(T(\gm)|H_0|^{1/2}u,|H_0|^{1/2}v\right)\right|_{\gm=0},\quad u,v\in \mathcal{Q},
\]
and $\dom(\widehat{H}_{\rm diag}^N(\gm))$ is a core for $|H_0|^{1/2}$. Moreover, $\widehat{H}_{\rm diag}^N(\gm)\to \widehat{H}_{\rm diag}(\gm)$ as $N\to\infty$ in the norm-resolvent topology, uniformly on compact subsets of $(-1,1)$.
\end{theorem}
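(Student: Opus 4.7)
The plan is to transfer the Taylor expansion from the unbounded self-adjoint family $\widehat{H}_{\rm diag}(\gm)$ to the bounded-holomorphic function $T(\gm)$, and then apply the form construction of Section~2 to build the truncated operators and prove their norm-resolvent convergence.

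First I would verify that $\{\widehat{H}_{\rm diag}(\gm)\}_{\gm\in\mathds{D}}$ is a holomorphic family which is self-adjoint for real $\gm\in(-1,1)$. By Theorem~\ref{th. integral of resolvent difference exists in norm} the resolvent $(H(\gm)-z)^{-1}$ is holomorphic in $\gm\in\mathds{D}$, hence via \eqref{eq. diff. of proj} the spectral projections $Q_\pm(\gm)$ are bounded-holomorphic; the angular operators $X_\pm(\gm)=P_\mp(P_\pm|_{Q_\pm(\gm)\H})^{-1}$ inherit this property, and the uniform bound $\sup_{\gm\in\mathds{D}}\|X_\pm(\gm)\|<1$ from Theorem~\ref{main theorem 1} lets me define $\Omega_\pm(\gm)$ by the norm-convergent binomial series \eqref{inverse square root series for Omega}, again bounded-holomorphic. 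For real $\gm$ the identity $X_-(\gm)=-X_+(\gm)^{*}$ of Theorem~\ref{main theorem 1 sym.}, together with the direct-rotation computation \eqref{Uunitary}, makes $U(\gm)$ unitary, so $\widehat{H}_{\rm diag}(\gm)=U(\gm)^{*}H(\gm)U(\gm)$ is self-adjoint.

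To see that $T(\gm)$ is bounded-holomorphic on $\mathds{D}$, I factor
\beqnt
|H_0|^{-1/2}\widehat{H}_{\rm diag}(\gm)|H_0|^{-1/2}=[|H_0|^{-1/2}U(\gm)^{-1}|H_0|^{1/2}]\,[|H_0|^{-1/2}H(\gm)|H_0|^{-1/2}]\,[|H_0|^{1/2}U(\gm)|H_0|^{-1/2}],
\eeqnt
in which the middle factor equals $\sgn(H_0)+\gm\,|H_0|^{-1/2}V|H_0|^{-1/2}$ and is bounded-holomorphic thanks to $\|VH_0^{-1}\|<1$. For the outer factors I would use that $(H(\gm)-z)^{-1}$ maps $\mathcal{Q}$ into $\dom(H(\gm))\subset\mathcal{Q}$ boundedly in the graph norm of $|H_0|^{1/2}$; hence $|H_0|^{1/2}Q_\pm(\gm)|H_0|^{-1/2}$ extends to a bounded-holomorphic operator on $\H$, and the explicit formula for $U$ in terms of $X_\pm$ and $\Omega_\pm$ then delivers the same for $|H_0|^{\pm 1/2}U(\gm)|H_0|^{\mp 1/2}$. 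This step is where I expect the main difficulty to lie, since one must carefully track how the rotation $U(\gm)$ interacts with the form domain $\mathcal{Q}$.

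With $T$ bounded-holomorphic in hand, its Taylor partial sums $T_N(\gm)=\sum_{n=0}^{N}(\gm^n/n!)T^{(n)}(0)$ converge to $T(\gm)$ in operator norm, uniformly on compact subsets of $\mathds{D}$, and for real $\gm$ each $T_N(\gm)$ is self-adjoint. The form $\widehat{\mathfrak{h}}_{\rm diag}^N(\gm)$ equals $(T_N(\gm)|H_0|^{1/2}u,|H_0|^{1/2}v)$, so its deviation from $\widehat{\mathfrak{h}}_{\rm diag}(\gm)$ on $\mathcal{Q}$ is dominated by $\|T_N(\gm)-T(\gm)\|\,\||H_0|^{1/2}u\|\,\||H_0|^{1/2}v\|$. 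For fixed $\gm\in(-1,1)$ and $N$ sufficiently large this deviation is form-small relative to $\widehat{H}_{\rm diag}(\gm)$ (whose form domain coincides with $\mathcal{Q}$), so applying Theorem~\ref{th. integral of resolvent difference exists in norm} with $\widehat{H}_{\rm diag}(\gm)$ in place of $H_0$ yields a unique self-adjoint $\widehat{H}_{\rm diag}^N(\gm)$ with domain a core for $|H_0|^{1/2}$. Finally, writing both $(\widehat{H}_{\rm diag}^N(\gm)-z)^{-1}$ and $(\widehat{H}_{\rm diag}(\gm)-z)^{-1}$ via the form-resolvent representation \eqref{resolvent H form construction} and estimating their difference by $\|T_N(\gm)-T(\gm)\|$ times a factor uniformly bounded on compact subsets of $(-1,1)$ yields the claimed norm-resolvent convergence.
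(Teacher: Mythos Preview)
Your overall architecture matches the paper's: reduce to bounded-holomorphy of $T(\gm)$, then build the truncations by the form machinery of Section~2 and read off norm-resolvent convergence. The gap is in the step you yourself flag as the hard one, namely showing that $|H_0|^{1/2}U(\gm)|H_0|^{-1/2}$ is bounded. Your proposed route---bound $|H_0|^{1/2}Q_\pm(\gm)|H_0|^{-1/2}$ from the resolvent integral and then push this through to $U(\gm)$---does not suffice: even if the conjugated projections are bounded, you only get that $|H_0|^{1/2}X_\pm(\gm)|H_0|^{-1/2}$ is bounded, with no control on the size of its norm. But to sum the binomial series \eqref{inverse square root series for Omega} for $\Omega_\pm(\gm)$ after conjugation by $|H_0|^{1/2}$ you need the strict inequality
\[
\bigl\|\,|H_0|^{1/2}X_\pm(\gm)|H_0|^{-1/2}\bigr\|<1,
\]
and nothing in your argument produces it. This is also where the hypothesis $\|VH_0^{-1}\|<1$ must enter; it is \emph{not} needed for the boundedness of the middle factor $|H_0|^{-1/2}H(\gm)|H_0|^{-1/2}$, which follows already from the form-boundedness assumed in Theorem~\ref{main theorem 1}.

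The paper closes this gap by a second, independent application of Theorem~\ref{W-space accretive thm.}, this time with the weight
\[
W:=\mu\,|H_0|^{-1}P_+-|H_0|^{-1}P_-,\qquad \mu:=\frac{\|VH_0^{-1}\|}{2-\|VH_0^{-1}\|}<1.
\]
A direct computation shows that $H(\gm)$ is $W$-accretive, so $Q_+(\gm)\H$ is $W$-nonnegative; written out, this is exactly $\|\,|H_0|^{-1/2}X_+(\gm)u\|^2\le\mu\,\|\,|H_0|^{-1/2}u\|^2$, and duality together with $X_-(\gm)=-X_+(\overline{\gm})^*$ gives the required strict bound for $|H_0|^{1/2}X_\pm(\gm)|H_0|^{-1/2}$. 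Once this is in place, your remaining steps (form-smallness of the remainder relative to $\widehat{H}_{\rm diag}(\gm)$, construction of $\widehat{H}^N_{\rm diag}(\gm)$, and the final resolvent estimate) go through essentially as you outline and as the paper does.
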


\begin{remark}
The norm-resolvent convergence in Theorem \ref{main theorem 2} implies convergence of the spectra and spectral projections of $\widehat{H}_{\rm diag}^N(\gm)$ to those of $\widehat{H}_{\rm diag}(\gm)$ since all operators are self-adjoint, see \cite[VIII.7.]{RS1}. 

On the other hand, in Theorem \ref{main theorem 1}, norm-resolvent convergence only implies that if $z\in \rho(H_{\rm diag}(\gm))$, then $z\in \rho(H_{\rm diag}^N(\gm))$ for $N$ sufficiently large. By contraposition, this means that if $z\in\sigma(H_{\rm diag}^N(\gm))$ for all $N$ sufficiently large, then $z\in\sigma(H_{\rm diag}(\gm))$. However, it can easily be seen from the proofs of Theorems \ref{th. integral of resolvent difference exists in norm} and \ref{main theorem 2} that the spectral projections corresponding to $\C_{\pm}$ do converge.    
\end{remark}

\begin{remark}
Like in \cite{SS06}, the method to prove convergence is based on analyticity. 
We remark that even if $\mathfrak{v}$ is symmetric, the detour through non self-adjoint operators (i.e.\ complex $\gm$) is unavoidable. Indeed, in order to estimate the radius of convergence of the polynomial approximation, we need complex analyticity, not just real analyticity.
\end{remark}


\section{Application to Dirac operators}

\subsection{The free Dirac operator}

Let us denote the free (i.e\ $\Phi=0$, $\mathbf{A}=0$) Dirac operator defined on the first order Sobolev space by $H_0$, i.e.\
\beq\label{Dirac operator}
H_0=\begin{pmatrix}1&-\I\,\boldsymbol{\sigma}\cdot\nabla\\-\I\,\boldsymbol{\sigma}\cdot\nabla&-1\end{pmatrix}, \quad \dom(H_0)=H^1(\R^3,\C^2)\oplus H^1(\R^3,\C^2).
\eeq
It is well-known that that the spectrum of $H_0$ is absolutely continuous and equal to
\beq\label{spec H_0}
 \sigma(H_0)=(-\infty,-1]\cup[1,\infty).
\eeq

The Foldy-Wouthuysen transformation $U_{\mathrm{FW}}$, in momentum space a multiplication operator by the function
\beq
u(\mathbf{p}):=\frac{1+E(p)+\be \boldsymbol{\al}\cdot \mathbf{p}}{N(p}=\frac{1}{N(p)}\begin{pmatrix}1+E(p)&\boldsymbol{\sigma}\cdot\mathbf{p}\\-\boldsymbol{\sigma}\cdot\mathbf{p}&1+E(p)\end{pmatrix},
\eeq
where 
\[
E(p):=\sqrt{1+p^2},\quad N(p):=\sqrt{2E(p)(1+E(p))},\quad p:=|\mathbf{p}|,\quad \mathbf{p}\in\R^3,
\]
block-diagonalizes the free Dirac operator: 
\[
 U_{\mathrm{FW}}\begin{pmatrix}I&-\I\boldsymbol{\sigma}\cdot\nabla\\-\I\boldsymbol{\sigma}\cdot\nabla&-I\end{pmatrix}U_{\mathrm{FW}}^{*}=\begin{pmatrix}\sqrt{1-\Delta}&0\\0&-\sqrt{1-\Delta}\end{pmatrix}.
\]
Moreover, the orthogonal projections $P_{\pm}=1/2(1 \pm H_0|H_0|^{-1})$ onto the positive and negative spectral subspaces of $H_0$ in momentum space are given by
\beq\label{free projections Dirac}
 \Lambda_{\pm}(\mathbf{p})
=\frac{1}{2E(p)}\begin{pmatrix}\pm 1+E(p)&\boldsymbol{\sigma}\cdot\mathbf{p}\\ \boldsymbol{\sigma}\cdot\mathbf{p}&\mp 1+E(p)\end{pmatrix}.
\eeq
For fixed $\mathbf{p}\in\R^3$, the matrices $\Lambda_{\pm}(\mathbf{p})$ are orthogonal projections in $\C^4$, and both possess a two-dimensional eigenspace corresponding to the eigenvalue $1$. For example, given two orthonormal vectors $u_1,u_2\in\C^2$, the vectors
\[
 \psi_i:=\frac{1}{N(p)}\begin{pmatrix}(1+E(p))u_i\\\boldsymbol{\sigma}\cdot\mathbf{p} \,u_i\end{pmatrix},\quad i=1,2,
\]
form an orthonormal basis of the eigenspace belonging to $\Lambda_{+}(\mathbf{p})$. It is thus seen that 
\[
P_+\H=\set{\mathcal{F}^{-1}\frac{1}{N(\cdot)}\begin{pmatrix}(1+E(\cdot))\widehat{u}(\cdot)\\ \boldsymbol{\sigma}\cdot(\cdot)\widehat{u}(\cdot)\end{pmatrix}}{u\in\H_{\rm u}}
=\set{\begin{pmatrix}u\\X_+ u\end{pmatrix}}{u\in\H_{\rm u}},
\]
where $\mathcal{F}$ is the Fourier transformation, $\widehat{u}:=\mathcal{F} u$ and 
\beqnt
X_+:=\mathcal{F}^{-1}\frac{\boldsymbol{\sigma}\cdot(\cdot)}{1+E(\cdot)}\mathcal{F}=-\I\cdot\boldsymbol{\sigma}\cdot\nabla\left(1+\sqrt{1-\Delta}\right)^{-1}\in\LL(\H_{\rm u},\H_{\rm l})
\eeqnt
is the angular operator of $P_+\H$ with respect to $\H_{\rm u}$.

\subsection{Assumptions on the potential}

We impose the following general conditions on the potential $V$.

\begin{hypothesis}\label{hypothesis general matrix potential}
Let $V$ be a measurable $4\times 4$-matrix-valued function, and assume that there exist $a,b\geq 0$ such that
\beq\label{assumption on V c}
 \|V(x)\|_{\LL(\C^4)}\leq a+\frac{b}{|x|}, \quad \mbox{for almost all } x\in\R^3.
\eeq
\end{hypothesis}

In most physically relevant applications, $V$ is given by
\beq\label{V electromagnetic}
V=\begin{pmatrix}\Phi &-\boldsymbol{\sigma}\cdot\mathbf{A}\\-\boldsymbol{\sigma}\cdot\mathbf{A}&\Phi \end{pmatrix}.
\eeq
However, we will not assume $V$ to be of that particular form. Most importantly, our assumptions cover the Coulomb potential $V=Z\al/|\cdot|$, where $\al\approx 1/137$ is the fine structure constant.

\medskip
The following proposition summarizes some useful inequalities. 

\begin{proposition}\label{prop. inequalities free Dirac}
Let $H_0$ be the free Dirac operator \eqref{Dirac operator}. Then the following hold.
\begin{itemize}
\item[\rm i)] $\displaystyle\|\,|\cdot|^{-1}u\|\leq 2 \ \, \|H_0 u\|$ for all $u\in\dom(H_0)$;\\
\item[\rm ii)] $\displaystyle|(|\cdot|^{-1}u,u)|\leq \pi/2 \, \|\,|H_0|^{1/2} u\|^2$ for all $u\in\mathcal{Q}$;\\
\item[\rm iii)] $\displaystyle|(|\cdot|^{-1}u,u)|\leq 1/2\cdot(\pi/2+2/\pi) \, \|\,|H_0|^{1/2} u\|^2$ for all $u \in P_+\mathcal{Q}\cup P_-\mathcal{Q}$.
\end{itemize}
\end{proposition}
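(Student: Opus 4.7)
The plan is to reduce all three estimates to scalar inequalities on $L^2(\R^3)$, exploiting that $H_0^2 = (1-\Delta)\, I_{\C^4}$; in particular, $|H_0| = \sqrt{1-\Delta}$ acts component-wise on $4$-spinors, and similarly for $|H_0|^{1/2}$. Multiplication by $|\cdot|^{-1}$ is of course also component-wise, so parts (i) and (ii) will follow immediately from their scalar counterparts applied to each of the four components.

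For part (i), the scalar identity $\|H_0 u\|^2 = (u,(1-\Delta)u) = \|u\|^2 + \|\nabla u\|^2 \geq \|\nabla u\|^2$ combined with the classical 3D Hardy inequality $\|\,|\cdot|^{-1}\varphi\|_{L^2} \leq 2\|\nabla \varphi\|_{L^2}$, applied componentwise and summed, yields the claimed constant $2$. For part (ii), I would invoke Kato's inequality $|\cdot|^{-1} \leq (\pi/2)\sqrt{-\Delta}$ as a quadratic form on $H^{1/2}(\R^3)$, again componentwise, and then use the spectral-theorem bound $\sqrt{-\Delta} \leq \sqrt{1-\Delta} = |H_0|$ to deduce $(|\cdot|^{-1}u, u) \leq (\pi/2)\||H_0|^{1/2}u\|^2$.

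The main obstacle is part (iii), which is the Evans--Perry--Siedentop (or Tix) refinement of Kato's inequality on the positive spectral subspace; it genuinely requires the full $4$-spinor structure, since the projections $P_\pm$ couple upper and lower components and do not commute with $|\cdot|^{-1}$, so a componentwise argument is not available. The approach I would follow is in momentum space. Writing $\widehat{u} = \mathcal{F}u$ and using the standard kernel representation
\[
(u, |\cdot|^{-1} u) = \frac{1}{2\pi^2}\int_{\R^3}\!\int_{\R^3} \frac{\langle \widehat{u}(\mathbf{p}), \widehat{u}(\mathbf{q})\rangle_{\C^4}}{|\mathbf{p}-\mathbf{q}|^2}\, d\mathbf{p}\, d\mathbf{q},
\]
together with $\widehat{u}(\mathbf{p}) = \Lambda_+(\mathbf{p})\widehat{u}(\mathbf{p})$ from \eqref{free projections Dirac}, the matrix inner product becomes $\langle \widehat{u}(\mathbf{p}), \Lambda_+(\mathbf{p})\Lambda_+(\mathbf{q})\widehat{u}(\mathbf{q})\rangle_{\C^4}$. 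Using the Pauli identity $(\boldsymbol{\sigma}\cdot\mathbf{p})(\boldsymbol{\sigma}\cdot\mathbf{q}) = \mathbf{p}\cdot\mathbf{q} + \I\,\boldsymbol{\sigma}\cdot(\mathbf{p}\times\mathbf{q})$ and the explicit form of $\Lambda_+(\mathbf{p})$, one bounds the absolute value of this inner product by a scalar kernel depending only on $p,q$ and $\mathbf{p}\cdot\mathbf{q}$, times $|\widehat{u}(\mathbf{p})|\,|\widehat{u}(\mathbf{q})|$. Integrating out the angular variables and applying a Schur test with weight $\varphi(p) = (E(p)/p)^{1/2}$ reduces the problem to a one-dimensional logarithmic integral whose supremum evaluates explicitly to $(\pi/2 + 2/\pi)/2$. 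The case $u \in P_-\mathcal{Q}$ follows by symmetry: conjugation with $\beta := \mathrm{diag}(I_2,-I_2)$ intertwines $\Lambda_+$ with $\Lambda_-$ while commuting with multiplication by $|\cdot|^{-1}$, so the bound transfers unchanged.
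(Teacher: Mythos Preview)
Your sketch is correct and, in fact, more detailed than what the paper itself supplies: the paper does not prove this proposition but simply attributes i) and ii) to the classical Hardy and Kato inequalities and iii) to Tix \cite{Tix1,Tix2} and Burenkov--Evans \cite{BE}. Your componentwise reduction for i) and ii) is exactly the standard argument, and your momentum-space Schur-test outline for iii) with weight $(E(p)/p)^{1/2}$ is precisely Tix's proof; the $\beta$-conjugation symmetry to pass from $P_+$ to $P_-$ is likewise standard. So there is nothing to correct --- you have reconstructed the content of the cited references, whereas the paper merely invokes them.
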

Inequalities i) and ii) are the classical Hardy, respectively Kato inequalities. Inequality iii) is due to \cite{Tix1, Tix2} and \cite{BE}, see also \cite{BE11}. It establishes the boundedness from below of the Brown-Ravenhall operator and was also used in \cite{DES2000} to prove variational principles for Dirac operators with Coulomb-like potentials.

\subsection{Magnetic fields}

If we want to include magnetic fields (i.e.\ $\mathbf{A}\neq 0$ in \eqref{V electromagnetic}), then the condition \eqref{assumption on V c} is too restrictive since it does not allow e.g.\ for constant fields $\mathbf{B}=\operatorname{curl} \mathbf{A}$. The magnetic field term should thus not be considered as a perturbation; rather, we should include it in the definition of the unperturbed operator.
If, for example, ${\mathbf A}\in L^3_{{\rm loc}}(\R^3,\R^3)$, then the magnetic Dirac operator
\beq\label{magnetic Dirac operator}
H_{{\mathbf A}}=\begin{pmatrix}1&-\I\boldsymbol{\sigma}\cdot\nabla_{{\mathbf A}}\\-\I\boldsymbol{\sigma}\cdot\nabla_{{\mathbf A}}&-1\end{pmatrix}, 
\eeq
is essentially self-adjoint on $C_0^{\infty}(\R^3\setminus\left\{0\right\})^4$, see e.g.\ \cite[notes sect.\ 4.3]{Th}. 
In the following, we denote its self-adjoint closure by the same symbol $H_{{\mathbf A}}$. Owing to the special structure of the operator matrix, $(-1,1)$ is a spectral gap for $H_{{\mathbf A}}$.
By combining Hardy's and Kato's inequalities with the diamagnetic inequality for Schr\"odinger operators \cite[Theorem 4.5.1.]{BE11}, one can prove the following proposition, see e.g.\ \cite[(4.7), (4.9)]{Jak06}.

\begin{proposition}\label{prop. inequalities magnetic Dirac}
Assume that ${\mathbf A}\in L^3_{{\rm loc}}(\R^3,\R^3)$ and that $\mathbf{B}:=\operatorname{curl}\mathbf{A}$ is essentially bounded.
Then the following hold.

\begin{itemize}
 \item[\rm i)] $\displaystyle\|\,|\cdot|^{-1}u\|^2\leq 4\,\|H_{\mathbf{A}} u\|^2+4\cdot\|\mathbf{B}\|_{\infty}\,\|u\|^2$ for all $u\in\dom(H_A)$;\\

 \item[\rm ii)] $\displaystyle |(|\cdot|^{-1}u,u)|\leq \pi/2\,\|\,|H_{\mathbf{A}}|^{1/2}u\|^2+\pi/2\cdot\sqrt{\|\mathbf{B}\|_{\infty}}\,\|u\|^2$
for all $u\in\dom(|H_A|^{1/2})$.
\end{itemize}

\end{proposition}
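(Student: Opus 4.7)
My plan is to reduce both estimates to the non-magnetic Hardy and Kato inequalities via the diamagnetic inequality, and to transfer spectral information from the magnetic Schr\"odinger operator $-\Delta_{\mathbf{A}}:=(-\I\nabla-\mathbf{A})^2$ to the Dirac operator through a Weitzenb\"ock-type identity. The starting point is the classical computation $(-\I\boldsymbol{\sigma}\cdot\nabla_{\mathbf{A}})^2=-\Delta_{\mathbf{A}}+\boldsymbol{\sigma}\cdot\mathbf{B}$ (using $\sigma_j\sigma_k=\delta_{jk}+\I\varepsilon_{jkl}\sigma_l$), which, by the block structure of $H_{\mathbf{A}}$, yields the operator identity $H_{\mathbf{A}}^2=-\Delta_{\mathbf{A}}+1+\diag(\boldsymbol{\sigma}\cdot\mathbf{B},\boldsymbol{\sigma}\cdot\mathbf{B})$ on $\dom(H_{\mathbf{A}}^2)$. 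Since $\|\boldsymbol{\sigma}\cdot\mathbf{B}(x)\|_{\LL(\C^2)}=|\mathbf{B}(x)|\leq\|\mathbf{B}\|_{\infty}$ for a.e.\ $x$, this implies the operator inequalities
\[
-\Delta_{\mathbf{A}}\leq H_{\mathbf{A}}^2+\|\mathbf{B}\|_{\infty}-1\leq H_{\mathbf{A}}^2+\|\mathbf{B}\|_{\infty}.
\]

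For (i) I would apply the pointwise diamagnetic inequality $|\nabla|u_j||\leq|(-\I\nabla-\mathbf{A})u_j|$ to each of the four spinor components of $u$ and sum, obtaining $\|\nabla|u|\|^2\leq(-\Delta_{\mathbf{A}}u,u)$. Combining this with the scalar Hardy inequality $\||\cdot|^{-1}|u|\|^2\leq 4\|\nabla|u|\|^2$ (applied componentwise, noting $|\,|\cdot|^{-1}u|=|\cdot|^{-1}|u|$) and with the Weitzenb\"ock identity yields
\[
\||\cdot|^{-1}u\|^2\leq 4(-\Delta_{\mathbf{A}}u,u)\leq 4\|H_{\mathbf{A}}u\|^2+4\|\mathbf{B}\|_{\infty}\|u\|^2,
\]
which is (i).

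For (ii) the same strategy requires a square-root version of the diamagnetic inequality, namely $((-\Delta)^{1/2}|u|,|u|)\leq((-\Delta_{\mathbf{A}})^{1/2}u,u)$. I would deduce this from the pointwise heat-kernel domination $|e^{t\Delta_{\mathbf{A}}}u|\leq e^{t\Delta}|u|$ together with the subordination formula $\sqrt{A}=\pi^{-1/2}\int_0^{\infty}t^{-3/2}(I-e^{-tA})\rd t$ for $A\geq 0$. Kato's inequality $(|\cdot|^{-1}f,f)\leq(\pi/2)((-\Delta)^{1/2}f,f)$ applied to the scalar $f=|u|$ then gives $|(|\cdot|^{-1}u,u)|\leq(\pi/2)((-\Delta_{\mathbf{A}})^{1/2}u,u)$. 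To complete the estimate I would invoke operator monotonicity of the square root (L\"owner--Heinz) on $-\Delta_{\mathbf{A}}\leq H_{\mathbf{A}}^2+\|\mathbf{B}\|_{\infty}$ and the elementary scalar inequality $\sqrt{s+c}\leq\sqrt{s}+\sqrt{c}$, lifted through the functional calculus of $|H_{\mathbf{A}}|$, to conclude
\[
(-\Delta_{\mathbf{A}})^{1/2}\leq|H_{\mathbf{A}}|+\sqrt{\|\mathbf{B}\|_{\infty}},
\]
whence (ii) follows.

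The main obstacle is the square-root diamagnetic inequality used in the proof of (ii): the pointwise heat-kernel domination does not survive verbatim under the passage to fractional powers, and one must either justify the subordination argument above or, equivalently, use the representation $(-\Delta_{\mathbf{A}})^{1/2}=\pi^{-1}\int_0^{\infty}\lambda^{-1/2}(-\Delta_{\mathbf{A}})(-\Delta_{\mathbf{A}}+\lambda)^{-1}\rd\lambda$ together with the pointwise domination of magnetic resolvent kernels. Once this ingredient is accepted, the remaining steps are routine operator-monotone comparisons and straightforward applications of the scalar Hardy and Kato inequalities.
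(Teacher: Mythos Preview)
Your proof is correct and follows precisely the route the paper indicates (but does not spell out): the paper merely states that the proposition follows ``by combining Hardy's and Kato's inequalities with the diamagnetic inequality for Schr\"odinger operators'' and refers to \cite{Jak06} for details. Your argument supplies exactly these details---the Lichnerowicz--Weitzenb\"ock identity $H_{\mathbf A}^2=-\Delta_{\mathbf A}+1+\boldsymbol{\sigma}\cdot\mathbf B$, componentwise diamagnetism plus scalar Hardy for (i), and heat-kernel/resolvent domination to obtain the fractional diamagnetic inequality, Kato's inequality, and L\"owner--Heinz for (ii)---so there is nothing to add.
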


As was noted in \cite{Jak08heat}, the boundedness assumption on the ${\mathbf B}$-field may be relaxed by using an estimate due to Balinsky, Evans and Lewis \cite{BEL01}, relating the Schr\"odinger operator $(\nabla-\I\mathbf{A})^2$ to the Pauli operator $(\boldsymbol{\sigma}\cdot\nabla_{{\mathbf A}})^2$, when the latter has no zero modes. 

\begin{proposition}\label{prop. ineq. with magn. field unbounded B}
Assume that ${\mathbf A}\in L^3_{{\rm loc}}(\R^3,\R^3)$, and let $\mathbf{B}:=\operatorname{curl}\mathbf{A}$, 
\beq\label{eq. deltamB}
\delta({\mathbf B}):=\inf_{x\in\H,\,\|x\|=1}\|(1-S^*S)x\|,\quad S:=|{\mathbf B}|^{1/2}\left((\sigma\cdot\nabla_{{\bf A}})^2+|{\mathbf B}|\right)^{-1/2}.
\eeq
Then $0<\delta({\mathbf B})\leq 1$, and the following hold.

\begin{itemize}
 \item[\rm i)] $\displaystyle\|\,|\cdot|^{-1}u\|\leq 2\cdot \delta({\mathbf B})^{-1}\, \|H_{\mathbf A} u\|^2$ for all $u\in\dom(H_{\mathbf A})$;\\

 \item[\rm ii)] $\displaystyle|(\,|\cdot|^{-1}u,u)|\leq \pi/2\cdot \delta({\mathbf B})^{-1}\,\|H_{\mathbf A} u\|^2$ for all $u\in\dom(|H_{\mathbf A}|^{1/2})$.

\end{itemize}
\end{proposition}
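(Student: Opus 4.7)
The plan rests on the Balinsky--Evans--Lewis inequality from \cite{BEL01}, which is precisely what the constant $\delta(\mathbf{B})$ quantifies, combined with the Lichnerowicz/Pauli identity
\[
\mathbf{p}_{\mathbf{A}}^{2} = D^{2} + \boldsymbol{\sigma}\cdot\mathbf{B}, \qquad \mathbf{p}_{\mathbf{A}} := -\I\nabla_{\mathbf{A}}, \quad D := -\I\boldsymbol{\sigma}\cdot\nabla_{\mathbf{A}}.
\]
The first step is to translate the definition of $\delta(\mathbf{B})$ into an operator inequality. Setting $T := D^{2} + |\mathbf{B}|$, a direct computation gives
\[
I - S^{*}S = T^{-1/2} D^{2} T^{-1/2} \geq 0,
\]
whence $S^{*}S \leq I$ and the upper bound $\delta(\mathbf{B}) \leq 1$; the strict positivity $\delta(\mathbf{B}) > 0$ is the substantive content of \cite{BEL01}, equivalent to the absence of zero modes for $D^{2}$. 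Since $I - S^{*}S$ is nonnegative self-adjoint, the definition of $\delta(\mathbf{B})$ gives $I - S^{*}S \geq \delta(\mathbf{B})\,I$, which rearranges to $T \leq \delta(\mathbf{B})^{-1} D^{2}$. Combined with the Lichnerowicz identity and $\boldsymbol{\sigma}\cdot\mathbf{B} \leq |\mathbf{B}|$, this yields the key \emph{reverse diamagnetic bound}
\[
\mathbf{p}_{\mathbf{A}}^{2} \leq \delta(\mathbf{B})^{-1} D^{2}.
\]

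For i), I would combine the classical Hardy inequality on $\R^{3}$ with the pointwise diamagnetic inequality $|\nabla |u_{k}|| \leq |\mathbf{p}_{\mathbf{A}} u_{k}|$ to get $\|\,|\cdot|^{-1} u_{k}\|^{2} \leq 4(\mathbf{p}_{\mathbf{A}}^{2} u_{k}, u_{k})$ for each spinor component $u_{k}$ of $u = (u_{1}, u_{2}) \in \dom(H_{\mathbf{A}})$, then apply the reverse diamagnetic bound and sum over $k$ to obtain $4\delta(\mathbf{B})^{-1}(\|D u_{1}\|^{2} + \|D u_{2}\|^{2}) \leq 4\delta(\mathbf{B})^{-1} \|H_{\mathbf{A}} u\|^{2}$, using the block-diagonal identity $H_{\mathbf{A}}^{2} = \diag(1 + D^{2},\, 1 + D^{2})$ with respect to \eqref{natural decomposition}. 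For ii), I would invoke the magnetic Kato inequality $(|\cdot|^{-1} u, u) \leq (\pi/2)(|\mathbf{p}_{\mathbf{A}}| u, u)$ (a consequence of the scalar Kato inequality together with a diamagnetic-type bound for the square root, see \cite[Theorem 4.5.1]{BE11}), then use operator monotonicity of $t \mapsto \sqrt{t}$ on the reverse diamagnetic bound to get $|\mathbf{p}_{\mathbf{A}}| \leq \delta(\mathbf{B})^{-1/2} |D|$, and finally $|D| \leq |H_{\mathbf{A}}|$ from the block structure to bound the right-hand side by $(\pi/2)\delta(\mathbf{B})^{-1/2}\||H_{\mathbf{A}}|^{1/2} u\|^{2}$.

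The main obstacle is the magnetic Kato inequality needed in ii): unlike the squared (Hardy) version, which follows at once from the pointwise diamagnetic inequality, passing to the half-power $|\mathbf{p}_{\mathbf{A}}|$ requires either a subordination argument or the magnetic refinement of the scalar Kato inequality, neither of which is entirely automatic under the sole assumption $\mathbf{A} \in L^{3}_{\rm loc}$. The cleanest route is to cite the version established in \cite{Jak06, BE11}. Once that ingredient is granted, the rest of the proof is bookkeeping: operator monotonicity of $\sqrt{\cdot}$ and the reduction to $H_{\mathbf{A}}$ through the block-diagonal identity are standard.
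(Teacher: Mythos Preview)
The paper does not supply its own proof of this proposition; it is stated with attribution to \cite{Jak08heat} and the Balinsky--Evans--Lewis estimate \cite{BEL01}. Your argument is precisely the one these citations point to: the identity $I-S^*S=T^{-1/2}D^2T^{-1/2}$ together with $\delta(\mathbf B)=\inf\sigma(I-S^*S)>0$ yields the ``reverse diamagnetic'' bound $\mathbf p_{\mathbf A}^2\le T\le\delta(\mathbf B)^{-1}D^2$, and the rest is Hardy/Kato plus the supersymmetric identity $H_{\mathbf A}^2=\diag(1+D^2,1+D^2)$. So there is nothing to compare --- your route \emph{is} the intended one.

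Two remarks. First, your argument actually produces the sharper constants $2\,\delta(\mathbf B)^{-1/2}$ and $(\pi/2)\,\delta(\mathbf B)^{-1/2}$ in i) and ii), respectively; since $\delta(\mathbf B)\le 1$ these imply the stated bounds, and in fact the proposition as printed contains typos (the right-hand side of i) mixes $\|\cdot\|$ with $\|\cdot\|^2$, and in ii) the expression $\|H_{\mathbf A}u\|^2$ for $u\in\dom(|H_{\mathbf A}|^{1/2})$ should read $\|\,|H_{\mathbf A}|^{1/2}u\|^2$). Second, you are right that the claim $\delta(\mathbf B)>0$ is not unconditional: it is equivalent to the absence of zero modes of the Pauli operator $D^2$, which the paper records just before the proposition (``when the latter has no zero modes''). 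Your identification of the magnetic Kato inequality $|x|^{-1}\le(\pi/2)\,|\mathbf p_{\mathbf A}|$ as the only non-automatic ingredient is accurate; it does follow from the scalar Kato inequality via the diamagnetic inequality for the resolvent (or heat) kernel and the integral representation of the square root, and citing \cite{BE11} or \cite{Jak06} for this is appropriate.
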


\subsection{Block-diagonalization and convergence of the DKH method}

For notational convenience, we subsequently identify the abstract unperturbed operator $H_0$ in Section 3 either with the free Dirac operator (denoted by the same symbol above) or with the magnetic Dirac  operator $H_{\mathbf A}$, depending on whether $\mathbf A$ vanishes or not. Correspondingly, the projections $P_{\pm}$ in Section 3 are identified with the spectral projections of the free or the magnetic Dirac operator. 

\begin{proposition}\label{prop. 1oversqert2}
We have $\|P_{u}-P_+\|=\|P_{l}-P_-\|=1/\sqrt{2}$.
\end{proposition}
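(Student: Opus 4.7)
The plan is to reduce the question to a pointwise matrix-norm computation in momentum space. Both $P_u$ and $P_+$ are Fourier multipliers by explicit $4 \times 4$ matrices: the symbol of $P_u$ is the constant matrix $\Lambda_u := \operatorname{diag}(I_2, 0)$, and the symbol of $P_+$ is the matrix-valued function $\Lambda_+(\mathbf{p})$ displayed in \eqref{free projections Dirac}. Since the Fourier transform is unitary on $\H$, the operator norm $\|P_u - P_+\|$ equals the essential supremum over $\mathbf{p} \in \R^3$ of the matrix norms $\|\Lambda_u - \Lambda_+(\mathbf{p})\|_{\LL(\C^4)}$.

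I would next compute the difference explicitly:
\[
\Lambda_u - \Lambda_+(\mathbf{p}) = \frac{1}{2E(p)}\begin{pmatrix}(E(p)-1)I_2 & -\boldsymbol{\sigma}\cdot\mathbf{p}\\ -\boldsymbol{\sigma}\cdot\mathbf{p} & -(E(p)-1)I_2\end{pmatrix}.
\]
This matrix is self-adjoint on $\C^4$, so its operator norm equals the square root of the largest eigenvalue of its square. A direct block multiplication, using the key identity $(\boldsymbol{\sigma}\cdot\mathbf{p})^2 = p^2\, I_2$, makes the off-diagonal blocks cancel, while each diagonal block simplifies via $(E(p)-1)^2 + p^2 = 2E(p)(E(p)-1)$ (from $E(p)^2 = 1 + p^2$). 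The result is
\[
(\Lambda_u - \Lambda_+(\mathbf{p}))^2 = \frac{E(p)-1}{2E(p)}\, I_4,
\]
so that $\|\Lambda_u - \Lambda_+(\mathbf{p})\|_{\LL(\C^4)}^2 = (E(p)-1)/(2E(p))$.

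The function $p \mapsto (E(p)-1)/(2E(p)) = \tfrac12(1 - 1/E(p))$ is strictly increasing in $p \in [0,\infty)$ and tends to $1/2$ as $p \to \infty$, while never attaining this value. Taking the supremum yields $\|P_u - P_+\|^2 = 1/2$, hence $\|P_u - P_+\| = 1/\sqrt{2}$. For $\|P_l - P_-\|$, the same computation applies verbatim after replacing the roles of upper and lower components; alternatively, one can invoke the symmetry $P_l = I - P_u$ and $P_- = I - P_+$, which immediately gives $P_l - P_- = -(P_u - P_+)$ and therefore the same norm. No step of this argument is genuinely difficult; the only minor subtlety is verifying the block cancellation in computing the square, which rests entirely on the anticommutation identity for the Pauli matrices.
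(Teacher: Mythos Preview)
Your computation for the free Dirac operator is correct and is precisely what the paper means when it says the result ``can easily be inferred from \eqref{free projections Dirac}''; you have simply made that inference explicit.

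However, there is a genuine gap. In the paragraph immediately preceding the proposition, the paper declares that $P_{\pm}$ stands for the spectral projections of \emph{either} the free Dirac operator \emph{or} the magnetic Dirac operator $H_{\mathbf A}$, depending on whether $\mathbf A=0$ or not. The proposition is therefore meant to cover both situations, and the paper's proof treats them separately: your momentum-space argument for $\mathbf A=0$, and a reference to supersymmetry arguments (citing \cite{LT}) for $\mathbf A\neq 0$. Your proof addresses only the first case. For general $\mathbf A\in L^3_{\rm loc}$ the projection $P_+$ is no longer a Fourier multiplier, so the reduction to a pointwise matrix computation is unavailable. The magnetic case instead exploits the supersymmetric structure of $H_{\mathbf A}$: writing $D:=-\I\,\boldsymbol{\sigma}\cdot\nabla_{\mathbf A}$ and $|H_{\mathbf A}|=(D^2+1)^{1/2}$, one has $P_+=\tfrac12(I+H_{\mathbf A}|H_{\mathbf A}|^{-1})$, and the difference $P_u-P_+$ can again be shown to have norm $1/\sqrt2$ by computing with the $2\times2$ block structure and the functional calculus for $D$, without any Fourier analysis. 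You should either add this argument or, as the paper does, cite the relevant supersymmetry result.
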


\begin{proof}
For the free Dirac operator, this can easily be inferred from \eqref{free projections Dirac}. For the magnetic Dirac operator, it follows from supersymmetry arguments, see \cite{LT}. 
\end{proof}

In view of Remark \ref{rem. Momega} and Propositions \ref{prop. inequalities free Dirac}-\ref{prop. ineq. with magn. field unbounded B}, sufficient conditions for Theorems \ref{main theorem 1}, \ref{main theorem 1 sym.}, \ref{main theorem 2}, \ref{main theorem 2 sym.} to hold may easily be obtained in terms of $a$ and $b$. For example, by Proposition \ref{prop. 1oversqert2}, in the absence of magnetic fields ($\mathbf{A}=0$), the assumptions of Theorem \ref{main theorem 1 sym.} are satisfied for $\widetilde{P}_+=P_{u}$, $\widetilde{P}_-=P_{l}$ (i.e.\ for the natural decomposition \eqref{natural decomposition}) whenever 
\[
a+\frac{b}{2}\left(\frac{\pi}{2}+\frac{2}{\pi}\right)<1,
\]
whereas for Theorem \ref{main theorem 2 sym.} the left hand side has to be less than $1/2$. 
In particular, for the Coulomb potential $V=Z\al/|\cdot|$ the above inequalities amount to $Z\leq 124$ and $Z\leq 62$, respectively. 
In the case $\mathbf{A}\neq 0$, the upper bound for $Z$ depends on the magnetic field; Proposition \ref{prop. ineq. with magn. field unbounded B} yields $Z\cdot\delta(\mathbf{B})^{-1}\leq 87$ (as compared to $Z\cdot\delta(\mathbf{B})^{-1}\leq 60$ in \cite{Jak08heat}).

\section{Proofs of the main results}

\begin{proof}[\rm{\textbf{Proof of Theorem \ref{main theorem 1}}}]
i) follows from \cite[Theorem 3.1]{Veselic} and Remark \ref{rem. Momega}; in addition, we have $\I\R\subset\rho(H)$.

\medskip
ii) is then a direct consequence of Theorem \ref{th. integral of resolvent difference exists in norm}. 

\medskip
iii) is shown in two steps. First for $\nu=0$ and then for the general case.

We claim that $H$ is $W_i$-accretive for $i=1,2$ and
\[
 W_1:=\frac{\rho}{2-3\rho}\,P_+-P_-,\quad  W_2:=P_+-\frac{\rho}{2-3\rho}\,P_-.
\]
Theorem~ \rref{W-space accretive thm.} then implies the existence of the angular operators $X^0_{\pm}$ of $Q_{\pm}\H$ with respect to $P_{\pm}\H$ such that
\[
\|X^0_{\pm}\|\leq \sqrt{\frac{\rho}{2-3\rho}}.
\]
This completes the proof in the case $\nu=0$.

For the general case, let $\widehat{P}_{\pm}$ and $\widehat{Q}_{\pm}$ be the orthogonal projections onto $\widetilde{P}_{\pm}\H$ and $Q_{\pm}\H$, respectively. By \cite[Theorem I.6.35.]{Ka}, we have
\[
\|\widehat{P}_{\pm}-P_{\pm}\|\leq \|\widetilde{P}_{\pm}-P_{\pm}\|,\quad \|\widehat{Q}_{\pm}-P_{\pm}\|\leq \|Q_{\pm}-P_{\pm}\|. 
\]
Hence, by the triangle inequality for the angular metric and Proposition \ref{graph subspace-pair of projections},
\begin{align*}
\arcsin \|\widehat{P}_{\pm}-\widehat{Q}_{\pm}\|&\leq  \arcsin \|P_{\pm}-\widehat{Q}_{\pm}\|+\arcsin \|\widehat{P}_{\pm}-P_{\pm}\|\\
&\leq  \arcsin \|P_{\pm}-Q_{\pm}\|+\arcsin \|\widetilde{P}_{\pm}-P_{\pm}\|\\
&\leq \arcsin \frac{\|X^0_{\pm}\|}{\sqrt{1+\|X^0_{\pm}\|^2}}+\arcsin \nu= \arctan \|X^0_{\pm}\|+\arcsin \nu\\
&\leq \arctan \sqrt{\frac{\rho}{2-3\rho}}+\arcsin \nu <\pi/2,
\end{align*}
which is equivalent to 
\[
\|\widehat{P}_{\pm}-\widehat{Q}_{\pm}\|\leq \sin\left(\arctan \sqrt{\frac{\rho}{2-3\rho}}+\arcsin \nu\right)<1.
\]
By Proposition \ref{graph subspace-pair of projections} again, $\widehat{Q}_{\pm}\H=Q_{\pm}\H$ is a graph subspace with respect to $\widehat{P}_{\pm}\H=\widetilde{P}_{\pm}\H$, with angular operators $X_{\pm}$ satisfying
\[
\|X_{\pm}\|=\frac{\|\widehat{P}_{\pm}-\widehat{Q}_{\pm}\|}{\sqrt{1-\|\widehat{P}_{\pm}-\widehat{Q}_{\pm}\|^2}}\leq \tan\left(\arctan \sqrt{\frac{\rho}{2-3\rho}}+\arcsin\nu\right).
\]

\medskip
We now show that $H$ is $W_i$-accretive for $i=1,2$. To this end, for $\mu_{\pm}>0$, we set $W:=\mu_+P_+-\mu_-P_-$. Since $P_{\pm}\mathcal{Q}\subset\mathcal{Q}$, the following sesquilinear forms are well-defined on $\mathcal{Q}$: 
\begin{align*}
 \hat{\mathfrak{v}}_{\rm D}[u,v]&:=\mu_+ \, \mathfrak{v}[P_+u,P_+v]-\mu_- \, \mathfrak{v}[P_-u,P_-v],\\
 \hat{\mathfrak{v}}_{\rm O}[u,v]&:=\mu_+ \, \mathfrak{v}[P_-u,P_+v]-\mu_- \, \mathfrak{v}[P_+u,P_-v],\\
 \chi_{\pm}[u,v]&:=\mu_{\pm} \, \|\,H_0|^{1/2}P_{\pm}u\| \, \|\,H_0|^{1/2}P_{\pm}v\|+\mu_{\pm} \, \re\,\mathfrak{v}_{\rm D}[P_{\pm}u,P_{\pm}v].
\end{align*}
Then, for $u\in\dom(H)$,
\beq\label{reHuWu}\begin{split}
&\re\,(WHu,u)=\re\,(Hu,Wu)=\re\,\mathfrak{h}[u,Wu]=\re\,\mathfrak{h}_{0}[u,Wu]+\re\,\mathfrak{v}[u,Wu]\\
&=\re\,\mathfrak{h}_0[u,Wu]+\re\,\hat{\mathfrak{v}}_{\rm D}[u]+\re\,\hat{\mathfrak{v}}_{\rm O}[u]
=\chi_+[u]+\chi_-[u]+\re\,\hat{\mathfrak{v}}_{\rm O}[u].
\end{split}\eeq
By assumption, we have
\begin{align*}
|\re\,\hat{\mathfrak{v}}_{\rm D}[u]|&\leq\rho\cdot\mu_+\,\|\,H_0^{1/2}P_+u\|^2+\rho\cdot\mu_-\,\|\,H_0^{1/2}P_-u\|^2,\\
|\re\,\hat{\mathfrak{v}}_{\rm O}[u]|&\leq\sigma|\re\,\mathfrak{v}[P_-u,P_+u]|
\leq\rho\cdot\frac{\sigma}{2}\,\|\,H_0^{1/2}P_+u\|^2+\rho\cdot\frac{\sigma}{2}\,\|\,H_0^{1/2}P_-u\|^2,
\end{align*}
where 
\[
\sigma:=\mu_+ +\mu_-.
\]
It follows that
\[
 \chi_{\pm}[u]\geq\mu_{\pm}(1-\rho)\|\,|H_0|^{1/2}P_{\pm}u\|^2\geq 0,
\]
and, by \eqref{reHuWu},
\[
\re\,(WHu,u)\geq\left(1-\frac{\rho\cdot\sigma}{2\,\mu_+\cdot(1-\rho)}\right)\chi_+[u]
+\left(1-\frac{\rho\cdot\sigma}{2\,\mu_-\cdot(1-\rho)}\right)\chi_-[u].
\]
For $\mu_+=\rho/(2-3\rho)$ and $\mu_-=1$, the expressions in brackets are nonnegative. The same is true for $\mu_+=1$ and $\mu_-=\rho/(2-3\rho)$.
\end{proof}

\begin{proof}[\rm{\textbf{Proof of Theorem \ref{main theorem 1 sym.}}}]
i) follows from \cite[Theorem 2.16]{Veselic} and Remark \ref{rem. Momega}.

\medskip
ii) The proof is identical to the one of Theorem \ref{main theorem 1} iii) above; the only difference is that here we can choose
\[
\sigma:=|\mu_+-\mu_-|,
\] 
and $H$ will be $W_i$-accretive for
\[
 W_1:=\frac{\rho}{2-\rho}\,P_+-P_-,\quad  W_2:=P_+-\frac{\rho}{2-\rho}\,P_-.
\]
Since $H$ is self-adjoint, the equality $X_-=X_+^*$ follows from the orthogonality of $Q_+\H$ and $Q_-\H$.
\end{proof}


Before we proceed to the proof of Theorem \ref{main theorem 2}, let us briefly recall the following proposition, which we shall subsequently use without further mention. The proof is a straightforward application of the Neumann series, see e.g.\ \cite[Lemma 1.8.1]{Ya}.

\begin{proposition}\label{lemma inverse holomorphic}
 Let $X$ be a Banach space and $\Omega\subset\C$ open. Assume that the operator-valued function $T(\cdot):\Omega\to X$ is holomorphic in $z_0\in\Omega$, and that $T(z_0)$ has a bounded inverse. Then $T(z)$ has a bounded inverse in a neighbourhood of~$z_0$, and $T(\cdot)^{-1}$ is holomorphic in $z_0$.
\end{proposition}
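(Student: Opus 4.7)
The plan is to reduce this to a standard Neumann series argument. The key factorization is
\[
T(z) = T(z_0) + \bigl(T(z) - T(z_0)\bigr) = T(z_0)\bigl[\Id + S(z)\bigr],\qquad S(z) := T(z_0)^{-1}\bigl(T(z) - T(z_0)\bigr),
\]
which reduces invertibility of $T(z)$ to invertibility of $\Id + S(z)$, the fixed factor $T(z_0)^{-1}$ being bounded by hypothesis. Since $T(\cdot)$ is holomorphic at $z_0$ and $T(z_0)^{-1}\in\LL(X)$ is a constant, $S(\cdot)$ is holomorphic at $z_0$, with $S(z_0)=0$; in particular $S$ is norm-continuous at $z_0$.

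By continuity, there exists an open neighborhood $U\subset\Omega$ of $z_0$ on which $\|S(z)\|\leq 1/2$. On this neighborhood the Neumann series
\[
(\Id + S(z))^{-1} = \sum_{n=0}^{\infty}(-1)^{n} S(z)^{n},\qquad z\in U,
\]
converges absolutely in operator norm, uniformly on compact subsets of $U$, with uniform bound $\leq 2$. Each partial sum is a polynomial in the holomorphic function $S(\cdot)$ and hence itself holomorphic on $U$, and a norm-uniform limit of holomorphic Banach-space-valued functions is again holomorphic (Morera/Weierstrass applied to $\varphi\circ (\Id+S(\cdot))^{-1}$ for $\varphi$ in the predual, or directly by passing the Cauchy integral under the limit).

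Composing with the fixed bounded operator $T(z_0)^{-1}$ on the right gives
\[
T(z)^{-1} = (\Id + S(z))^{-1}\, T(z_0)^{-1},\qquad z\in U,
\]
which is both bounded and holomorphic on $U$. I do not anticipate any real obstacle here; the only mildly non-mechanical point is the invocation of the Weierstrass-type theorem for operator-valued holomorphic functions, which is classical and in any event can be bypassed by checking that the explicit Neumann series, being a locally uniformly convergent power series in $S(z)-S(z_0)$, is itself holomorphic term-by-term.
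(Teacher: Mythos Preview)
Your argument is correct and is precisely the Neumann series approach the paper has in mind; the paper does not spell out a proof but simply cites the Neumann series (referring to \cite[Lemma 1.8.1]{Ya}), which is exactly what you have written out.
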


\begin{proof}[\rm{\textbf{Proof of Theorem \ref{main theorem 2}}}]
We first show that $W(\gm)$ is holomorphic in $\mathds{D}$.

Replacing $C$ by $\gm C$ in \eqref{eq. diff. of resolvents with Dn}, one observes that $$(H(\gm)-\I\eta)^{-1}-(H_0-\I\eta)^{-1}$$
is a norm-convergent power series for $\gm\in\mathds{D}$. By the proof of Lemma \ref{th. integral of resolvent difference exists in norm} the integral
\[
 Q_{\pm}(\gm)-P_{\pm}=\frac{1}{2\pi\I}\int_{\R}^{'}\left((H(\gm)-z)^{-1}-(H_0-z)^{-1}\right)\,\rd z
\]
converges in the norm operator topology, uniformly in $\gm$ on compact subsets of $\mathds{D}$. This shows that $\{H(\gm)\}_{\gm\in\mathds{D}}$ and $\{Q_{\pm}(\gm)\}_{\gm\in\mathds{D}}$ are holomorphic families. 

From \eqref{eq. Qplus} we infer that 
\[
 X_+(\gm)=Q_{21}(\gm)Q_{11}(\gm)^{-1},\quad\ \gm\in\mathds{D},\quad Q_+(\gm)=:\left(Q_{ij}(\gm)\right)_{i,j=1}^2,
\]
whence $\{X_{+}(\gm)\}_{\gm\in\mathds{D}}$ is holomorphic. The proof for $\{X_{-}(\gm)\}_{\gm\in\mathds{D}}$ is analogous. Consequently, $\{W(\gm)\}_{\gm\in\mathds{D}}$ is holomorphic. 

\medskip
Since the $H_0$-bound of $V$ is less than one, the family $\{H(\gm)\}_{\gm\in\mathds{D}}$ is holomorphic of type (A), with $\dom(H(\gm))=\dom(H_0)$ for all $\gm\in\mathds{D}$ \cite[VII]{Ka}. We now show that $H_{\rm diag}(\gm)$ is also holomorphic of type (A). It is sufficient to show that
\beq\label{eq. WdomH0subsetdomH0}
W(\gm)\dom(H_0)\subset\dom(H_0),\quad \gm\in\mathds{D}.
\eeq
Indeed, since $H_0$ is closed and $W(\gm)$ is bounded, the operator $$Y(\gm):=H_0W(\gm)H_0^{-1}$$ is closed and thus bounded by the closed graph theorem. It follows that 
\beq\label{eq. domain constant}
\dom(H_{\rm diag}(\gm))=\dom(H_0W(\gm))=\dom(Y(\gm)H_0)=\dom(H_0),\quad \gm\in\mathds{D}.
\eeq

It remains to show that $H_{\rm diag}(\gm)u$ is holomorphic for every $u\in\dom(H_0)$. Since $H(\gm)H_0^{-1}$ is bounded and holomorphic in norm, the same applies to $H_{\rm diag}(\gm)H_0^{-1}$ in virtue of
\[
H_{\rm diag}(\gm)H_0^{-1}=W(\gm)^{-1}H(\gm)W(\gm)H_0^{-1}=W(\gm)^{-1}H(\gm)H_0^{-1}Y(\gm).
\]
In particular, $H_{\rm diag}(\gm)u=H_{\rm diag}(\gm)H_0^{-1}(H_0u)$ is holomorphic for  every $u\in\dom(H_0)$.
%
%
%
%

\medskip
We now prove \eqref{eq. WdomH0subsetdomH0}; note that this is equivalent to
\beq\label{eq. XdomH0subsetdomH0}
X_{\pm}(\gm)P_{\pm}\dom(H_0)\subset P_{\mp}\dom(H_0),\quad \gm\in\mathds{D}.
\eeq
Since $P_{\pm}\dom(H_0)\subset\dom(H_0)$, we can write $H(\gm)$ as an operator matrix with respect to the decomposition $\H=P_+\H\oplus P_-\H$ as follows:
\[
 H(\gm)=\begin{pmatrix}
         A&B\\C&D\end{pmatrix}:=\begin{pmatrix}
         P_+(H_0+\gm V)P_+&P_+(H_0+\gm V)P_-\\P_-(H_0+\gm V)P_+&P_-(H_0+\gm V)P_-\end{pmatrix}.
\]
By the Schur-Frobenius factorization (see e.g.\ \cite{CT} for unbounded operators), the bounded invertibility of $(H(\gm)-\I\eta)$ is equivalent to the bounded invertibility of e.g.\ the first Schur complement
\[
S_+(\I\eta):=A-\I\eta-B(D-\I\eta)^{-1}C,\quad \dom(S_+(\I\eta)):=P_+\dom(H_0).
\]
Moreover, we have
\beq\label{eq. inverse Schur}
(H(\gm)-\I\eta)^{-1}=\begin{pmatrix}S_+(\I\eta)^{-1}&-S_+(\I\eta)^{-1}B(D-\I\eta)^{-1}\\ *&*\end{pmatrix},
\eeq
where the lower entries of the matrix are bounded operators which can be expressed in terms of $B$, $C$, $D$ and $S_+(\I\eta)^{-1}$; we won't need the explicit expressions here.

Since $Q_+H(\gm)\subset H(\gm)Q_+$, it follows that $Q_+(H(\gm)-\I\eta)^{-1}=(H(\gm)-\I\eta)^{-1}Q_+$. Comparing the ranges of the operators on the left and right hand side yields
\[
Q_+\H\cap\dom(H_0)=(H(\gm)-\I\eta)^{-1}Q_+\H.
\]
Projecting onto $P_+\H$ on either side, we obtain, using \eqref{eq. inverse Schur},
\beq\label{eq. set Schur}
\set{x\in P_+\H}{X_+(\gm)x\in P_-\H}=\set{S_+(\I\eta)^{-1}(1-B(D-\I\eta)^{-1})X_+(\gm)x}{x\in P_+\H}.
\eeq
Setting
\begin{align*}
p&:=\|P_+VP_-(P_-H_0P_--\I\eta)^{-1}\|,\\
q&:=\|P_-VP_-(P_-H_0P_--\I\eta)^{-1}\|,\\
r&:=\|V(H_0-\I\eta)^{-1}\|,
\end{align*}
and observing that $p^2+q^2\leq r^2$, we obtain, by a Neumann series argument,
\[
\|B(D-\I\eta)^{-1}\|\leq \frac{p}{1-q}\leq \frac{r}{\sqrt{1-r^2}}.
\]
Denoting by $b_0$ the $H_0$-bound of $V$, we find that
\[
\lim_{|\eta|\to\infty} \|B(D-\I\eta)^{-1}X_+(\gm)\|^2\leq \frac{b_0^2}{1-b_0^2}\cdot\frac{\rho}{2-3\rho}<1.
\]
Hence, for $|\eta|$ sufficiently large, $1-B(D-\I\eta)^{-1}X_+(\gm)$ is a isomorphism in $P_+\H$, and the right hand side of \eqref{eq. set Schur} equals $P_+\dom(H_0)$. Therefore, we have
\[
\set{x\in P_+\H}{X_+(\gm)x\in P_-\H}=P_+\H,
\]
which is equivalent to the first inclusion in \eqref{eq. XdomH0subsetdomH0}. The second inclusion is shown analogously, by using the second Schur complement
\[
S_-(\I\eta):=D-\I\eta-C(A-\I\eta)^{-1}B,\quad \dom(S_-(\I\eta)):=P_-\dom(H_0).
\]

\medskip
The fact that $H_{\rm diag}(\gm)$ is holomorphic of type (A) now implies the norm-resolvent convergence of its Taylor series. We notice that the proof given here can be adapted to the case when $H_0$ is not boundedly invertible (or even when $\H$ is only a Banach space) by regarding the following operators as maps from the Banach space $\dom(H_0)$ (with the graph norm) into $\H$.

For $N\in\N$ and $\gm\in\mathds{D}$, define the operators $R^N(\gm)$ by
\[
R^N(\gm)u:=H_{\rm diag}(\gm)u-H_{\rm diag}^N(\gm)u,\quad u\in\dom(R^N(\gm)):=\dom(H_0).
\]
Since $H_{\rm diag}(\gm)H_0^{-1}$ is bounded-holomorphic in $\mathds{D}$, its Taylor series converges uniformly on every compact subset  $K\subset\mathds{D}$, which means that
\[
\sup_{\gm\in K}\|R^N(\gm)H_0^{-1}\|\to 0,\quad N\to\infty.
\]
By the stability of bounded invertibility \cite[Theorem IV.1.16.]{Ka} it thus follows that $H_{\rm diag}^N(\gm)$
has a bounded inverse for sufficiently large $N$; moreover, by the second resolvent identity,
\[\begin{split}
\|H_{\rm diag}(\gm)^{-1}-H_{\rm diag}^N(\gm)^{-1}\|&\leq \|H_{\rm diag}^N(\gm)^{-1}\|\,\|R^N(\gm)H_{\rm diag}(\gm)^{-1}\|\\
&\leq \frac{\|H_{\rm diag}(\gm)^{-1}\|\,\|R^N(\gm)H_{\rm diag}(\gm)^{-1}\|}{1-\|R^N(\gm)H_{\rm diag}(\gm)^{-1}\|}.
\end{split}\]
The latter converges to zero as $N\to \infty$ since
\[\begin{split}
\|R^N(\gm)H_{\rm diag}(\gm)^{-1}\|
&=\|R^N(\gm)H_0^{-1}Y(\gm)H_0H(\gm)^{-1}W(\gm)^{-1}\|\\
&\leq\|R^N(\gm)H_0^{-1}\|\,\|Y(\gm)\|\,\|H_0H(\gm)^{-1}\|\,\|W(\gm)^{-1}\|.
\end{split}\]
The convergence is uniform in $\gm\in K$ since the functions $\|H_{\rm diag}^N(\cdot)^{-1}\|$, $\|Y(\cdot)\|$ etc.\ are continuous from $K$ to $[0,\infty)$ and hence take their maximum on the compact set $K$.
\end{proof}









\begin{proof}[\rm{\textbf{Proof of Theorem \ref{main theorem 2 sym.}}}]
Like in the proof of Theorem \ref{main theorem 2}, one shows that $K_{\pm}(\gm)$ are holomorphic in $\mathds{D}$. The absolute convergence of the series \ref{inverse square root series for Omega}, locally uniformly in $\gm\in\mathds{D}$, implies the holomorphy of $\Omega_{\pm}(\gm)$ and hence of $U(\gm)$. The equation
\beqnt
  \widehat{H}_{\rm diag}(\gm)^{-1}=U(\gm)^{-1}H(\gm)^{-1}U(\gm),\quad  \gm\in\mathds{D},
\eeqnt
then shows that $\widehat{H}_{\rm diag}(\gm)$ is holomorphic in $\mathds{D}$. 

Clearly, since $V$ is symmetric,
%
%
$\{H(\gm)\}_{\gm\in\mathds{D}}$ is a self-adjoint family, i.e.\
\[
H(\gm)=H^*(\overline{\gm}),\quad \gm\in\mathds{D}.
\]
By the identity theorem for bounded-holomorphic functions, we have  
\beqnt
X_-(\gm)=-X_+(\overline{\gm})^*,\quad U(\gm)U(\overline{\gm})^*=U(\overline{\gm})^*U(\gm)=I
\eeqnt
for all $\gm\in\mathds{D}$. It follows that $\widehat{H}_{\rm diag}(\gm)$ is a self-adjoint family.

\medskip
We now show that for all $\gm\in\mathds{D}$, $|H_0|^{-1/2}\widehat{H}(\gm)|H_0|^{-1/2}$ extends to a bounded operator by closure.
We claim that it is sufficient to show the following:
\beq\label{eq. X mapping property}
X_{\pm}(\gm)P_{\pm}\mathcal{Q}\subset \mathcal{Q},\quad \|\,|H_0|^{1/2}X_{\pm}(\gm)|H_0|^{-1/2}\|<1.
\eeq

Indeed, it then follows that for all $u\in P_{\pm}\H$, $v\in P_{\pm}\mathcal{Q}$, $\|u\|=\|v\|=1$,
\[\begin{split}
&\left|(\Omega_{\pm}(\gm)|H_0|^{-1/2}u,|H_0|^{1/2}v)\right|\\
=&\left|\sum_{n=0}^{\infty}\begin{pmatrix}-1/2\\n\end{pmatrix}\left(\left[-X_{\mp}(\gm)X_{\pm}(\gm)\right]^n |H_0|^{-1/2}u,|H_0|^{1/2}v\right)\right|\\
=&\left|\sum_{n=0}^{\infty}\begin{pmatrix}-1/2\\n\end{pmatrix}\left(|H_0|^{1/2}\left[-X_{\mp}(\gm)X_{\pm}(\gm)\right]^n |H_0|^{-1/2}u,v\right)\right|\\
\leq &\sum_{n=0}^{\infty}\left|\begin{pmatrix}-1/2\\n\end{pmatrix}\right|\left\|\,|H_0|^{1/2}X_-(\gm)|H_0|^{-1/2}\right\|^n\left\||H_0|^{1/2}X_+(\gm)|H_0|^{-1/2}\right\|^n. 
\end{split}\]
By the definition of the adjoint, this implies that
\[
\Omega_{\pm}(\gm)|H_0|^{-1/2}P_{\pm}\H\subset P_{\pm}\mathcal{Q},
\]
whence, by the closed graph theorem,
\[
 |H_0|^{1/2}\Omega_{\pm}(\gm)|H_0|^{-1/2}\in\LL(P_{\pm}\H).
\]
It is then easy to see that
\beq\label{eq. U bounded on Q}
 |H_0|^{1/2}U(\gm)|H_0|^{-1/2}\in\LL(\H),\quad |H_0|^{1/2}U(\gm)^{*}|H_0|^{-1/2}\in\LL(\H),
\eeq
and we have
\[\begin{split}
 &|H_0|^{-1/2}\widehat{H}_{\diag}(\gm)|H_0|^{-1/2}\\
&=\left(|H_0|^{-1/2}U(\overline{\gm})^*|H_0|^{1/2}\right)\left(|H_0|^{-1/2}H(\gm)|H_0|^{-1/2}\right)\left(|H_0|^{1/2}U(\gm)|H_0|^{-1/2}\right).
\end{split}\]
Since the first and the third factor above are bounded, the claim is proved if the second factor has a bounded closure. Since $\dom(H(\gm))=\dom(H_0)$, it follows by the Heinz inequality that $\dom(|H(\gm)|^{1/2})=\dom(|H_0|^{1/2})$. The latter is equivalent to the boundedness of $|H_0|^{-1/2}H(\gm)|H_0|^{-1/2}$ on $|H_0|^{1/2}\dom(H(\gm))$, see e.g.\ \cite[Theorem 3.2.]{GKMV10Rep}. Since $H(\gm)$ is a core for $|H_0|^{1/2}$, this domain in dense in $\H$, and the operator has a bounded closure. This may also be verified directly by formula \eqref{H form construction}.

\medskip
To prove \eqref{eq. X mapping property}, we introduce the operator
\[
 W:=\mu |H_0|^{-1}P_+-|H_0|^{-1}P_-\in\LL(\H),\quad \mu:=\frac{\|VH_0^{-1}\|}{2-\|VH_0^{-1}\|}<1.
\]
For $u\in\dom(H_0)$, we have
\[\begin{split}
&\re(WH(\gm)u,u)=\re(H(\gm)u,Wu)\geq(H_0u,Wu)-|(Vu,Wu)|\\
&\geq \mu\|P_+u\|^2+\|P_-u\|^2-\|u\| \, \|VH_0^{-1}\| \, \|\mu P_+u-P_-u\|\\
&\geq \mu \,\|P_+u\|^2+\|P_-u\|^2- \frac{1}{2}\cdot \|VH_0^{-1}\| \left(\|u\|^2+\mu^2\,\|P_+u\|^2+\|P_-u\|^2\right)\\
&\geq \left(\mu-\frac{1}{2}\cdot\|VH_0^{-1}\| -\mu\cdot\frac{1}{2}\cdot\|VH_0^{-1}\| \right)\|P_+u\|^2+\left(1-\|VH_0^{-1}\| \right)\|P_-u\|^2,
\end{split}\]
and both summands in the last line are nonnegative. Thus, $H(\gm)$ is $W$-accretive, and $Q_{\pm}(\gm)\H$ are $W$-nonnegative and $W$-nonpositive, respectively, by Theorem~\ref{W-space accretive thm.}. From the $W$-nonnegativity of $Q_+$ e.g.\ it follows that for $u\in P_+\H$,
\[
 \mu\|\,|H_0|^{-1/2}u\|^2-\||H_0|^{-1/2}X_+(\gm)u\|^2=\left(W\begin{pmatrix}u\\X_{+}(\gm)u\end{pmatrix},\begin{pmatrix}u\\X_{+}(\gm)u\end{pmatrix}\right)\geq 0,
\]
or, put differently,
\[
 \|\,|H_0|^{-1/2}X_{+}(\gm)|H_0|^{1/2}u\|\leq \mu\|u\|,\quad u\in P_+\dom(H_0).
\]
Hence, $|H_0|^{-1/2}X_{+}(\gm)|H_0|^{1/2}$ has an extension to an operator in $\LL(P_+\H)$ bounded by $\mu$, and, by duality,
\[
 |H_0|^{1/2}X_+(\gm)^*|H_0|^{-1/2}\in\LL(P_-\H),\quad \|H_0|^{1/2}X(\gm)^*|H_0|^{-1/2}\|\leq \mu<1.
\]
Since $X_+(\gm)^*=-X_-(\overline{\gm})$, half of \eqref{eq. X mapping property} is proved; the other half follows analogously from the nonpositivity of $Q_-$.

\medskip
For each $\gm\in\mathds{D}$, $N\in\N$, we define the bounded forms 
\begin{align*}
\mathfrak{q}^N(\gm)[u,v]&:=\sum_{n=N+1}^{\infty} \frac{\gm^n}{n!}\frac{\rd^n}{\rd\gm^n}\left.\left(T(\gm)u,v\right)\right|_{\gm=0},\quad u,v\in \H,\\
\mathfrak{r}^N(\gm)[u,v]&:=\mathfrak{q}^N(\gm)[|H_0|^{1/2}u,|H_0|^{1/2}v],\quad u,v\in\mathcal{Q}.
\end{align*}
Since $T(\gm)$ is bounded-holomorphic, it clearly holds that
\[
\mathfrak{q}^N(\gm)[u,v]=(Q^{N}(\gm)u,v),\quad u,v\in\H,\quad Q^{N}(\gm):=\sum_{n=N+1}^{\infty} \frac{\gm^n}{n!}\,T^{(n)}(0)\xrightarrow{N\to\infty} 0,
\]
with convergence in the operator norm topology and uniform in $\gm$ on compact subsets of $\mathds{D}$. Observe that
\[
 \widehat{\mathfrak{h}}_{\rm diag}^N(\gm)[u,v]=(\widehat{H}_{\rm diag}(\gm)u,v)-\mathfrak{r}^N(\gm)[u,v],\quad u\in\dom(\widehat{H}_{\rm diag}(\gm)),v\in \mathcal{Q}.
\]

\medskip
We now claim that for $\gm\in(-1,1)$, the operator $\widehat{H}_{\rm diag}(\gm)$ represents the form
\[
\widehat{\mathfrak{h}}_{\rm diag}(\gm)[u,v]:=\mathfrak{h}(\gm)[U(\gm)u,U(\gm)v],\quad u,v\in\dom(\widehat{\mathfrak{h}}_{\rm diag}(\gm)):=U(\gm)^*\mathcal{Q}=\mathcal{Q}
\]
and that $\dom(\widehat{H}_{\rm diag}(\gm))$ is a core for $|H_0|^{1/2}$. This then implies that
\beqnt
 \widehat{\mathfrak{h}}_{\rm diag}^N(\gm)[u,v]=\widehat{\mathfrak{h}}_{\rm diag}[u,v]-\mathfrak{r}^N(\gm)[u,v],\quad u,v\in \mathcal{Q}.
\eeqnt

First, by \eqref{eq. U bounded on Q}, we have $U(\gm)^*\mathcal{Q}=\mathcal{Q}$. Hence, $Y(\gm):=|H_0|^{1/2}U(\gm)^*|H_0|^{-1/2}$ is onto, one-to-one and bounded and thus an isomorphism by the closed graph theorem. It follows that
\[
|H_0|^{1/2}\dom(\widehat{H}_{\rm diag}(\gm))=|H_0|^{1/2}U(\gm)^*\dom(H(\gm))=Y(\gm)|H_0|^{1/2}\dom(H(\gm)),
\]
and the latter is dense in $\H$; this proves the second claim. The first claim follows from the spectral theorem if we observe that the spectral families $\{E_{\lm}(\gm)\}_{\lm\in\R}$ and $\{\widehat{E}_{\lm}(\gm)\}_{\lm\in\R}$ of the self-adjoint operators $H(\gm)$ and $\widehat{H}_{\rm diag}(\gm)$ are related by
\[
\widehat{E}_{\lm}(\gm)=U(\gm)^*E_{\lm}(\gm)U(\gm),\quad \lm\in\R.
\]
In particular, this yields
\beq\label{eq. norms equivalent}
\dom(|\widehat{H}_{\rm diag}(\gm)|^{1/2})=U(\gm)^*\dom(|H(\gm)|^{1/2})=U(\gm)^*\dom(|H_0|^{1/2})=\dom(|H_0|^{1/2}),
\eeq
whence $|H_0|^{1/2}$ is $|\widehat{H}_{\rm diag}(\gm)|^{1/2}$-bounded. Therefore, for $u,v\in\H$, $\|u\|=\|v\|=1$,
\begin{align*}
&|\mathfrak{r}^N(\gm)[|\widehat{H}_{\rm diag}(\gm)|^{-1/2}u,|\widehat{H}_{\rm diag}(\gm)|^{-1/2}v]|\\
=&|\mathfrak{q}^N(\gm)[|H_0|^{1/2}|\widehat{H}_{\rm diag}(\gm)|^{-1/2}u,|H_0|^{1/2}|\widehat{H}_{\rm diag}(\gm)|^{-1/2}v]|\\
\leq&\|Q^{N}(\gm)\| \, \|\,|H_0|^{1/2}|\widehat{H}_{\rm diag}(\gm)|^{-1/2}\|^2 \xrightarrow{N\to\infty} 0
\end{align*}
locally uniformly in $\gm\in\mathds{D}$. Thus, for $N$ sufficiently large, \cite[Theorem 2.11]{Veselic} yields that there exists a unique self-adjoint operator $\widehat{H}_{\rm diag}^N(\gm)$ associated to the form $\widehat{\mathfrak{h}}_{\rm diag}^N(\gm)$.
By construction, $\dom(\widehat{H}_{\rm diag}^N(\gm))$ is a core for $|\widehat{H}_{\rm diag}(\gm)|^{1/2}$. Since the norms $\|\,|\widehat{H}_{\rm diag}(\gm)|^{1/2}\cdot\|$ and $\|\,|H_0|^{1/2}\cdot\|$ are equivalent by \eqref{eq. norms equivalent}, it is also a core for $|H_0|^{1/2}$.

\medskip
The norm-resolvent convergence of $\widehat{H}_{\rm diag}^N(\gm)$ to $\widehat{H}_{\rm diag}(\gm)$ follows from the final argument in the proof of Theorem \ref{th. integral of resolvent difference exists in norm}. More precisely, if $K\subset (-1,1)$ is a compact subset, let
\[
b_N:=\sup_{\gm\in K}\left(\|Q^{N}(\gm)\| \, \|\,|H_0|^{1/2}|\widehat{H}_{\rm diag}(\gm)|^{-1/2}\|^2\right).
\]
We then have
\[
\|(\widehat{H}_{\rm diag}(\gm)-\I\eta)^{-1}-(\widehat{H}^N_{\rm diag}(\gm)-\I\eta)^{-1}\|\leq \frac{b_N}{1-b_N}\frac{1}{|\eta|}\xrightarrow{N\to\infty} 0. \qedhere
\]
\end{proof}

\bibliographystyle{plain}
\bibliography{Block-diagonalization_Arxiv}

\end{document}